\newcommand{\gk}{\gamma_{all,k}^\infty}
\newcommand{\gtwo}{\gamma_{all,2}^\infty}
\begin{document}

\title{Eternal distance-$k$ domination on graphs
}


\author{D. Cox        \and
        E. Meger $^*$ \and 
        M.E. Messinger 
}


\institute{E. Meger \at
Laboratoire d'alg\`ebre, du combinatoire et d'informatique math\'ematique, Universit\'e du Qu\'ebec \`a Montr\'eal\\
              \email{erin.k.meger@ryerson.ca}           
           \and
           D. Cox \at
              Department of Mathematics and Statistics, Mount Saint Vincent University
              \and
              M.E. Messinger \at
              Department of Mathematics and Computer Science, Mount Allison University
}

\date{Nov 16, 2022}

\maketitle

\begin{abstract}
 Eternal domination is a dynamic process by which a graph is protected from an infinite sequence of vertex intrusions. In eternal distance-$k$ domination, guards initially occupy the vertices of a distance-$k$ dominating set.  After a vertex is attacked, guards ``defend'' by each moving up to distance $k$ to form a distance-$k$ dominating set, such that some guard occupies the attacked vertex. The eternal distance-$k$ domination number of a graph is the minimum number of guards needed to defend against any sequence of attacks.  The process is well-studied for the situation where $k=1$. We introduce eternal distance-$k$ domination for $k > 1$. 
    
    Determining whether a given set is an eternal distance-$k$ domination set is in EXP, and in this paper we provide a number of results for paths and cycles, and relate this parameter to graph powers and domination in general. For trees we use decomposition arguments to bound the eternal distance-$k$ domination numbers, and solve the problem entirely in the case of perfect $m$-ary trees.
\keywords{graph theory \and trees \and domination \and eternal domination}

\noindent \textbf{Funding} M.E. Messinger acknowledges research support from NSERC (grant application 2018-04059).  D. Cox acknowledges research support from NSERC (2017-04401) and Mount Saint Vincent University.  E. Meger acknowledges research support from Universit\'e du Qu\'ebec \`a Montr\'eal and Mount Allison University.


\end{abstract}

\section{Introduction} \label{sec:intro}

In recent years, researchers have become increasingly interested in dynamic domination processes on graphs and {\it eternal domination} problems on graphs have been particularly well-studied (see the survey~\cite{survey}, for example).  In the all-guards move model for eternal domination, a set of vertices are occupied by ``guards" and the vertices occupied by guards form a dominating set on a graph.  At each step, an unoccupied vertex is ``attacked'', by some unknown, external adversary, and guards seek to defend the attacked vertex. Each guard may remain at their current vertex or move along an edge to a neighbouring vertex, in order to occupy a dominating set that contains the attacked vertex. Such a movement of guards is said to ``defend against an attack''. The {\it eternal domination number} of a graph, denoted $\gamma_{all}^\infty$ is the minimum number of guards required to defend against any sequence of attacks, where the subscript and superscript indicate that {\it all} guards can move in response to an attack and the sequence of attacks is infinite.  A generalization of this process, along with other vertex-pursuit games, is the Spy-Game which has been shown to be NP-hard \cite{spygame}.  Given the complexity of determining the eternal domination number of a graph for the all-guards move model, recent work such as~\cite{BKV,finbowetal,FMvB,GHHKM,HKM,LMSS,vBvB}, has focused primarily on bounding or determining the parameter for particular classes of graphs.

In this paper, we extend the notion of eternal domination to that of eternal distance-$k$ domination in the most natural way: suppose at time $t=0$, the guards occupy a set of vertices that form a distance-$k$ dominating set. At each time  step $t>0$ an unoccupied vertex is attacked and every guard moves distance at most $k$ so that the guards occupy the vertices of a distance-$k$ dominating set that contains the attacked vertex. In Figure~\ref{Fig:example1}(a), we see a single guard, $g$ occupying a distance-2 dominating set on $P_5$. When the attack, $a$ occurs at the leaf indicated in grey, the guard moves to the attacked vertex. However, this leaves two vertices indicated in grey that the guard can no longer protect. Not all distance dominating sets are eternal. In Figure~\ref{Fig:example1}(b), we show two guards can eternally distance-2 dominate $P_5$.  We note that for $k=1$, the process is equivalent to the all-guards move model for eternal domination as described above.  Hence, we focus on results for $k \geq 2$.

\begin{figure}
    \centering
    \includegraphics[width=0.41\textwidth]{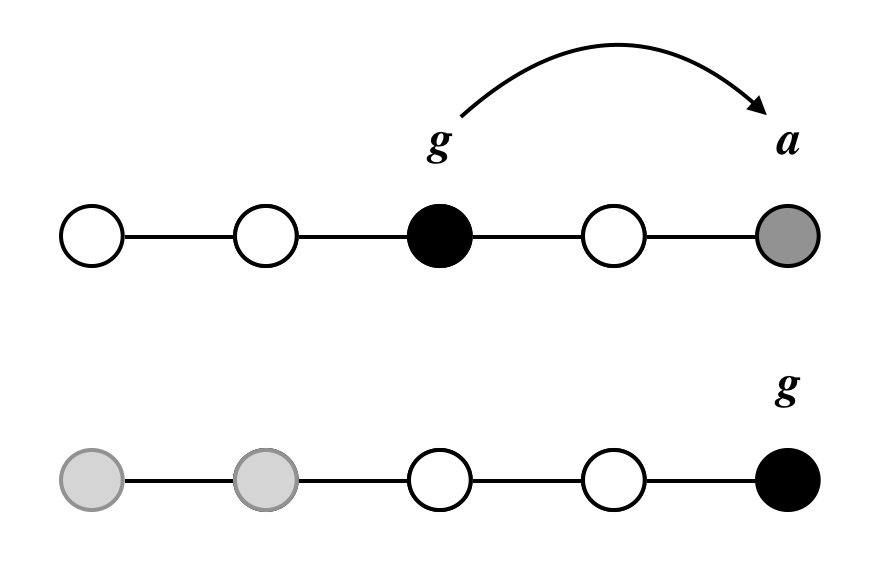}\hspace{1cm}\includegraphics[width=0.4\textwidth]{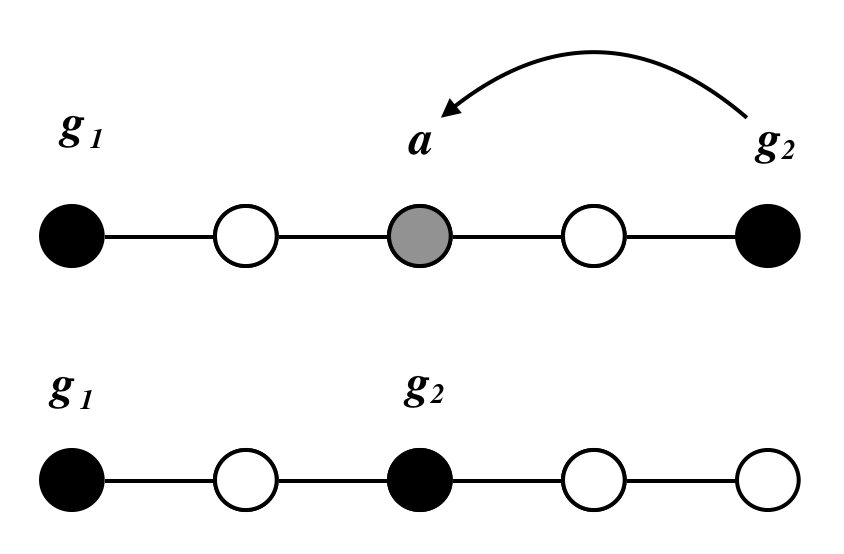}
    \caption{An example of eternal distance-2 domination on $P_5$ with (a) one guard and (b) two guards.}
    \label{Fig:example1}
\end{figure}

We present preliminary results in Section~\ref{sec:2} that provide general bounds, the complexity of the associated decision problem, and determine the eternal distance-$k$ domination number exactly for some small classes of graphs.  In Section~\ref{section_superstar}, we show that the eternal distance-$k$ domination number of a graph is bounded above by the eternal distance-$k$ domination number of a spanning tree of the graph, which motivates us then to focus on trees in Section~\ref{sec:trees}. The eternal domination number of a tree was characterized in~\cite{KlosterM} by using two reductions.  We extend the concepts of these reductions to the eternal distance-$k$ domination model, providing reductions that, informally speaking, ``trim branches" of trees in such a way that the change in the eternal distance-$k$ domination number is controlled.  However, for the eternal distance-$k$ domination model  such reductions are insufficient to fully characterize the eternal distance-$k$ domination number of all trees and we state some resulting open problems.  In Section~\ref{sec:mary}, we determine exactly the eternal distance-$k$ domination number for perfect $m$-ary trees. We follow this with a discussion on decomposing trees for the specific case where $k=2$ in Section~\ref{section_treedecomposition}. Since this paper introduces the concept of eternal distance-$k$ domination, we conclude with a series of open questions in Section~\ref{section_Conclusion}.  

We conclude this section with some formal definitions. As it is often important to differentiate between vertices exactly at distance $k$ and vertices of at most distance $k$, we make use of the following definitions and their corresponding notation. In a graph $G$, the \emph{open distance-$k$ neighbourhood} of $x \in V(G)$ is $N_k(x) = \{ v \in V(G)~:~d(x,v)=k\}$, and the \emph{closed distance-$k$ neighbourhood} of $x \in V(G)$ is $N_k[x] = \{ v \in V(G)~:~d(x,v)\leq k\}$. The {\it eccentricity} of vertex $u$ in graph $G$, denoted $\epsilon_G(u)$, is the maximum distance between $u$ and any other vertex in $G$. More formally, $\epsilon_G(u) = \max_{v \in V(G)} d(u,v)$.

\begin{definition}\label{defn:kdom} Let $G$ be a graph and $k\geq 1$ an integer. A multiset $D=\{d_1,...,d_j\}$ with $d_i \in V(G) \; \forall \;  i$ is a {\bf distance-$k$ dominating set} if every vertex of $V(G) \backslash D$ is at most distance $k$ from a vertex in $D$.  The minimum cardinality of a distance-$k$ dominating multiset in graph $G$ is the {\bf distance-$k$ domination number}, denoted $\gamma_k(G)$.   
\end{definition}

The vertices of the multiset $D$ can be considered the locations of the guards.  We permit multiple guards to occupy the same vertex at the same step.  It was shown in~\cite{finbowetal} that for some graphs, the eternal domination number differs depending on whether multiple guards are permitted to occupy the same vertex at the same step; in particular, allowing multiple guards to occupy the same vertex at the same time can result in a smaller number of guards needed to defend against a sequence of attacks. Though we have defined the distance-$k$ dominating sets to be multisets, we'll informally use the term `set' throughout, acknowledging that guards may occupy the same vertex.

\begin{definition} Let $G$ be a graph.  Let $\mathbb{D}_{k,q}(G)$ be the set of all  distance-$k$ dominating sets of $G$ which have cardinality $q$.  Let $D,D' \in \mathbb{D}_{k,q}(G)$.  We will say $D$  {\bf transforms} to $D'$ if $D=\{v_1,v_2,\dots,v_q\}$, $D'=\{u_1,u_2,\dots,u_q\}$, and $u_i \in N_k[v_i]$ for all $i \in [q]$.\end{definition}

We suppose the multiset $D$ represents the vertices occupied by guards.  If $D$ transforms to $D'$, then the guards occupying vertices of $D$ can each move at most distance $k$ to occupy the vertices of $D'$.  

\begin{definition} \label{def:kdomfamily} An {\bf eternal distance-$k$ dominating family} of $G$ is a subset $\mathcal{E} \subseteq \mathbb{D}_{k,q}(G)$ for some $q$ so that for every $D \in \mathcal{E}$ and each possible attack $v \in V(G)$, there is a distance-$k$ dominating set $D' \in \mathcal{E}$ so that $v \in D'$ and $D$ transforms to $D'$.  

A multiset $D \in \mathbb{D}_{k,q}(G)$ is an {\bf eternal distance-$k$ dominating set} if it is a member of some eternal distance-$k$ dominating family. Eternal domination is a discrete time-process, so at each iteration of an ``attack'' the distance-$k$ dominating set $D$ transforms into some other set $D'$ within the family. 

The {\bf eternal distance-$k$ domination number} of a graph $G$, denoted $\gk(G)$, is the minimum $q$ for which an eternal distance-$k$ dominating family of $G$ exists. We use this notation to indicate that all guards are allowed to move a distance of at most $k$ during each step.

\end{definition}

\section{Eternal distance-$k$ domination on general graphs}\label{sec:2}

\subsection{Preliminary Results}\label{sec:prelim}

In this section, we relate the eternal distance-$k$ domination number to known graph parameters, as well as determine the eternal distance-$k$ domination number for well-known families of graphs. We then determine the complexity of computing this parameter.

By definition, any eternal distance-$k$ dominating set is also a distance-$k$ dominating set, giving us the lower bound in Proposition~\ref{eqn:upper}.  However, we can also bound the eternal distance-$k$ dominating number of a graph by its $\lfloor k/2\rfloor$-domination number:

\begin{proposition}

\label{eqn:upper}For any graph $G$ and integer $k \geq 2$, $$\gamma_k(G)\leq \gk(G) \leq \gamma_{\lfloor\frac{k}{2}\rfloor}(G).$$ \end{proposition}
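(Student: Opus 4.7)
The lower bound $\gamma_k(G) \le \gk(G)$ is immediate from Definition~\ref{def:kdomfamily}: every eternal distance-$k$ dominating set is in particular an element of $\mathbb{D}_{k,q}(G)$, so its cardinality is at least the minimum size of a distance-$k$ dominating set.

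For the upper bound I would exhibit an explicit defending strategy. Set $j = \lfloor k/2 \rfloor$ and fix a minimum distance-$j$ dominating set $D = \{d_1, \dots, d_q\}$ with $q = \gamma_j(G)$. Place guard $i$ initially at its ``home'' $d_i$. The invariant I plan to maintain throughout the game is: after each response, every guard is at its home, except possibly one guard that sits at the most recently attacked vertex (necessarily within distance $j$ of its home).

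The attack response is as follows. When vertex $v$ is attacked, pick $d_i \in D$ with $d(v, d_i) \le j$, which exists because $D$ is a distance-$j$ dominating set; send guard $i$ to $v$, and order every other guard to return to its home. Each such move has length at most $j \le k$, so it is legal. I would then check that the resulting multiset is a distance-$k$ dominating set containing $v$: for any $u \in V(G)$, choose $d_{i^*}$ with $d(u, d_{i^*}) \le j$; if $i^* \ne i$ the guard at $d_{i^*}$ dominates $u$ within distance $j \le k$, and if $i^* = i$ then $v$ is occupied and the triangle inequality gives $d(u, v) \le d(u, d_i) + d(d_i, v) \le 2j \le k$.

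Taking the family $\mathcal{E}$ to consist of $D$ itself together with all configurations of the form $(D \setminus \{d_i\}) \cup \{v\}$ for some $i$ and some $v \in N_j[d_i]$, the argument above shows $\mathcal{E}$ is closed under transformation in response to any attack (a guard currently at $v \ne d_i$ can always return to $d_i$ in a single move of length $\le j$, which is what makes the strategy sustainable indefinitely rather than only for a single round). Thus $\mathcal{E}$ is an eternal distance-$k$ dominating family of size $q$. The only delicate point is the triangle-inequality step $2 \lfloor k/2 \rfloor \le k$, which holds for both parities of $k$ and is precisely why the floor appears in the bound; I do not foresee any further combinatorial obstacle.
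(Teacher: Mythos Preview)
Your proof is correct and follows essentially the same approach as the paper: both fix a minimum distance-$\lfloor k/2\rfloor$ dominating set $D$ and use the triangle-inequality observation that each ball $N_{\lfloor k/2\rfloor}[d_i]$ has diameter at most $k$, so one guard per ball suffices. The only cosmetic difference is that the paper moves \emph{only} the guard whose ball contains the attacked vertex (leaving the others wherever they are inside their respective balls), whereas you additionally send all non-responding guards back to their homes; either movement rule works for the same reason.
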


\begin{proof}
For the upper bound, consider a graph $G$ with $\gamma_{\lfloor k/2 \rfloor}(G)=\ell$ and  $\lfloor k/2 \rfloor$-dominating set $D = \{v_1,v_2,\dots,v_{\ell}\}$. For each $j \in \{1,2,\dots \ell \}$, the maximum distance between any two vertices in $N_{\lfloor k/2 \rfloor}[v_j]$ is $k$.  Specifically, a guard $g_j$ is placed on an arbitrary vertex of $N_{\lfloor k/2 \rfloor}[v_j]$ for each $j \in \{1,2,\dots,\ell \}$.  The guard $g_j$ will only ever occupy vertices of $N_{\lfloor k/2 \rfloor}[v_j]$.  Given an attack at a vertex $x$ in $N_{\lfloor k/2 \rfloor}[v_j]$, guard $g_j$ can move to the attacked vertex and no other guard moves.  We note that it is possible that the attacked vertex is within distance $\lfloor k/2 \rfloor$ from multiple vertices in $D$.
\qed\end{proof}

It is easy to see that if graph $G$ has a universal vertex, then $\gk(G) = \gamma_k(G)=\gamma_{\lfloor\frac{k}{2}\rfloor}(G) = 1$. However, it is worth noting that the difference between $\gamma_k(G)$ and $\gamma_{\lfloor \frac{k}{2}\rfloor}(G)$ can be arbitrarily large. Let $G$ be the graph $K_{1,n}$ where each edge is subdivided $k-1$ times. Then clearly $\gamma_k(G)=1$, but $\gamma_{\lfloor k/2\rfloor}(G)=n+1$. 
  
\begin{theorem}\label{obs:cycle} 
For $n \geq 3$ and $k \geq 1$, \[\gk(C_n) = \gamma_k(C_n) = \Big \lceil \frac{n}{2k+1}\Big \rceil.\]
\end{theorem}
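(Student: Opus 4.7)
The plan is to establish both equalities in turn and note that the eternal version follows from exhibiting a simple rigid-rotation strategy.

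First, I would pin down the classical value $\gamma_k(C_n)=\lceil n/(2k+1)\rceil$ by a standard counting argument: along a cycle, each guard dominates at most $2k+1$ vertices (itself together with the $k$ vertices on each side), giving the lower bound $\gamma_k(C_n)\geq\lceil n/(2k+1)\rceil$. For the matching upper bound, set $q=\lceil n/(2k+1)\rceil$, label $V(C_n)=\{0,1,\ldots,n-1\}$, and place guards at positions $0,\,2k+1,\,2(2k+1),\,\ldots,\,(q-1)(2k+1)$. This yields $q-1$ consecutive gaps of length exactly $2k+1$ and one residual gap of length $n-(q-1)(2k+1)\in\{1,\ldots,2k+1\}$; each such gap is covered by its two endpoint guards, confirming that this is a valid distance-$k$ dominating set.

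Combining this with the left-hand inequality of Proposition~\ref{eqn:upper} (which is immediate from the definition, since every eternal distance-$k$ dominating set is in particular a distance-$k$ dominating set) gives $\gk(C_n)\geq\gamma_k(C_n)=\lceil n/(2k+1)\rceil$.

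For the matching upper bound $\gk(C_n)\leq \lceil n/(2k+1)\rceil$, I would exhibit an explicit eternal distance-$k$ dominating family $\mathcal{E}$ built from the initial configuration above by a rigid-rotation strategy. Given any attack at vertex $v$, some guard $g$ at position $p$ satisfies $d(p,v)\leq k$ because the current configuration is a distance-$k$ dominating set; let $\delta$ be the signed cyclic shift from $p$ to $v$, so $|\delta|\leq k$. Every guard now moves by $\delta$ in the same direction along the cycle. Each guard moves distance at most $k$, $v$ is occupied by the image of $g$, and since $C_n$ is vertex-transitive, the shifted set is again a distance-$k$ dominating set with the same gap structure. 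Taking $\mathcal{E}$ to be the set of all cyclic rotations of the initial configuration then yields a family closed under the defense operation, witnessing $\gk(C_n)\leq q$.

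There is no real obstacle here; the only mild subtlety is the residual short gap when $2k+1 \nmid n$, but the rigid-rotation strategy preserves the entire multiset of gap lengths and every gap of length at most $2k+1$ is fully dominated by its endpoints, so the argument goes through uniformly.
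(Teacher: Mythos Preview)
Your argument is correct and follows essentially the same approach as the paper: both establish $\gamma_k(C_n)=\lceil n/(2k+1)\rceil$ by a routine counting/packing argument and then obtain the eternal upper bound via the identical rigid-rotation strategy, shifting every guard by the same signed distance $\delta$ so that the gap structure is preserved. The only cosmetic difference is that the paper starts from an arbitrary minimum distance-$k$ dominating set rather than your specific evenly spaced one, but the defense strategy and the resulting family of configurations are the same.
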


\begin{proof} Observe $C_n$ can be decomposed into $\lceil \frac{n}{2k+1}\rceil$ vertex-disjoint paths, each of length at most $2k$. Since a center vertex of a path of length at most $2k$ will $k$-dominate the path, $\gamma_k(C_n) = \lceil \frac{n}{2k+1}\rceil$.

Next we show that an arbitrary minimum distance-$k$ dominating set is indeed a minimum eternal distance-$k$ dominating set. Place guards on the vertices of an arbitrary minimum distance-$k$ dominating set. Suppose vertex $v$ is attacked and let $u$ be a vertex within distance $k$ of $v$ that contains a guard.  The guard at $u$ moves distance $d(u,v)=x$ to occupy $v$ and all other guards move exactly distance $x$ in the same direction.The guards shift collectively to maintain their relative positions in a distance-$k$ dominating set, and occupy the attacked vertex. The resulting distance-$k$ dominating set is equivalent to the original, and the guards can respond to attacks in this manner indefinitely.\qed\end{proof} 

As a consequence of Theorem~\ref{obs:cycle}, whenever a graph $G$ is Hamiltonian, we obtain the following upper bound, by simply considering the guards moving strictly along the Hamilton cycle.

\begin{corollary}\label{cor:Ham} 
Let $G$ be a Hamiltonian graph on $n$ vertices.  Then for $k \geq 1$, $\gk(G) \leq \lceil \frac{n}{2k+1}\rceil$.
\end{corollary}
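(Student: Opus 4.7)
The plan is to exploit the fact that a Hamilton cycle $C$ in $G$ is a spanning subgraph isomorphic to $C_n$, and then to ``lift'' the defense strategy from Theorem~\ref{obs:cycle} to $G$. Concretely, I would fix a Hamilton cycle $C$ in $G$ and place guards on the vertices of a minimum distance-$k$ dominating set of $C$, whose size is $\lceil n/(2k+1)\rceil$ by Theorem~\ref{obs:cycle}. The guards will defend attacks in $G$ by moving exclusively along the edges of $C$, mimicking the shift-strategy used in the proof of Theorem~\ref{obs:cycle}.

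The key observations to verify are two consistency statements between $C$ and $G$. First, since $C$ is spanning and every edge of $C$ is an edge of $G$, the inequality $d_G(x,y) \leq d_C(x,y)$ holds for all $x,y \in V(G)$; hence any distance-$k$ dominating set of $C$ is a distance-$k$ dominating set of $G$, and in particular the sets used during the shift strategy remain distance-$k$ dominating in $G$ at every step. Second, a guard moving at most distance $k$ in $C$ traverses at most $k$ edges of $C$, each of which is also an edge of $G$, so its net displacement in $G$ is at most $k$ — a legal move in the eternal distance-$k$ domination game on $G$.

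Given any attack at a vertex $v \in V(G) = V(C)$, the guards respond using the $C_n$ strategy from Theorem~\ref{obs:cycle}: the guard on $C$ closest to $v$ along the cycle moves to $v$, and every other guard shifts the same distance in the same direction around $C$. By the two observations above, the resulting configuration is a distance-$k$ dominating set of $G$ containing $v$, and each guard moved at most distance $k$ in $G$. Since the $C_n$ strategy can respond to an arbitrary infinite sequence of attacks, so can its lift to $G$, which yields $\gk(G) \leq \lceil n/(2k+1)\rceil$.

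There is no substantive obstacle here; the proof is essentially bookkeeping. The only point that warrants explicit mention is the distance-monotonicity under adding edges, which is what ensures that both the domination property and the move-length bound transfer from $C$ to $G$.
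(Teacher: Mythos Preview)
Your proposal is correct and follows exactly the approach the paper indicates: lift the shift-strategy for $C_n$ from Theorem~\ref{obs:cycle} to $G$ by having the guards move only along the edges of a fixed Hamilton cycle. The paper states this in one line (``by simply considering the guards moving strictly along the Hamilton cycle''), and your two consistency observations about distance monotonicity are precisely the bookkeeping that makes that line rigorous.
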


Although it is easy to see that $\gamma_k(P_n)=\lceil \frac{n}{2k+1}\rceil$, we next show the eternal distance-$k$ domination number of a path is larger. Furthermore, this is an example of a graph $G$ for which $\gk(G) = \gamma_{\lfloor k/2 \rfloor}(G)$.

\begin{theorem}\label{thrm:paths} For $n \geq 1$ and $k \geq 1$, $\gk(P_n)=\lceil \frac{n}{k+1} \rceil$.
\end{theorem}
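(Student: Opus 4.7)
The plan is to prove the equality by matching upper and lower bounds, both hinging on the fact that a subpath on $k+1$ consecutive vertices has diameter exactly $k$.

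For the upper bound, I would partition $V(P_n)$ into $m=\lceil n/(k+1)\rceil$ consecutive intervals $I_1,\dots,I_m$, each of size at most $k+1$, and place one guard inside each $I_j$. Since the diameter of each $I_j$ is at most $k$, this is a distance-$k$ dominating set, and any guard can reach any vertex of its own interval in a single step. The eternal strategy is then the obvious one: on an attack at $v\in I_j$, only the guard in $I_j$ moves, travelling directly to $v$, while all other guards stay put. The invariant ``exactly one guard per interval'' is preserved, so the new configuration is again a distance-$k$ dominating set.

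For the lower bound, I would suppose for contradiction that an eternal distance-$k$ dominating family of size $q<\lceil n/(k+1)\rceil$ exists, equivalently $n\ge q(k+1)+1$. The adversary attacks the vertices $u_t=(t-1)(k+1)+1$ in order at times $t=1,\dots,q$; each is a legitimate attack since $u_q=(q-1)(k+1)+1<n$. After the attack at time $t$ some guard $h_t$ occupies $u_t$; the key claim is that $h_1,\dots,h_q$ are pairwise distinct, for otherwise a single guard would have to traverse the distance $(t-s)(k+1)$ in only $t-s$ moves each of length at most $k$, which is impossible. Hence the $h_t$ exhaust all $q$ guards. At the terminal time $q$, guard $h_t$ has made at most $q-t$ further moves since being pinned at $u_t$, so its position is at most $u_t+(q-t)k=(q-1)k+t$, maximized at $t=q$ to yield $(q-1)(k+1)+1=q(k+1)-k$. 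But dominating vertex $n$ requires some guard at position at least $n-k\ge q(k+1)-k+1$, a contradiction. The main subtlety is the pinning step: the spacing $k+1$ between adversary attacks is tuned exactly so that the per-step speed limit of $k$ prevents one guard from serving two attacks, forcing $q$ distinct guards to be pinned simultaneously. Once this distinctness is in hand, the failure to cover vertex $n$ reduces to direct arithmetic.
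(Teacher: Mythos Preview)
Your proof is correct. The upper bound matches the paper's argument exactly: partition $P_n$ into $\lceil n/(k+1)\rceil$ consecutive blocks of at most $k+1$ vertices and let each guard patrol its own block.

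Your lower bound, however, takes a genuinely different route from the paper. The paper argues structurally: with one fewer guard, some block $P_{(j)}$ is empty, and since the leaf $v_0$ forces a guard to remain in $P_{(0)}$, and inductively each earlier block must retain its guard, pulling a guard into $P_{(j)}$ from the right cascades and leaves the far leaf $v_{n-1}$ undominated. You instead give an explicit adversary and a speed-limit argument: attacking $u_t=(t-1)(k+1)+1$ for $t=1,\dots,q$ forces $q$ \emph{distinct} guards to be pinned (since covering two of these attacks would require a guard to travel $(t-s)(k+1)$ in $t-s$ steps of length at most $k$), and then simple arithmetic shows no guard can be within distance $k$ of vertex $n$ after the last attack. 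Your approach is cleaner in that it produces a concrete finite attack sequence that defeats $q$ guards and avoids the somewhat informal ``guard $g_i$ cannot leave $P_{(i)}$'' reasoning in the paper; the paper's approach, on the other hand, makes the invariant ``every block always contains a guard'' explicit, which connects more directly to the upper-bound strategy.
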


\begin{proof} Let $P_n = (v_0,v_1,...,v_{n-1})$. We partition the path into vertex-disjoint subpaths, $P_{(i)} = P_n[\{v_{i(k+1)},v_{i(k+1)+1}, ..., v_{(i+1)(k+1)-1}\}]$ for $0\leq i \leq \lfloor \frac{n}{k+1}\rfloor-1$, each of length at most $k$, with the last path being the remaining vertices, $P_{\left( \lfloor \frac{n}{k+1}\rfloor \right)} = P_n[\{v_{\lfloor \frac{n}{k+1}\rfloor}, ... , v_{n-1}\}]$. We then assign one guard to each such subpath, so that $g_i$ is in $P_i$ for all defined paths. It is easy to see that $\lceil \frac{n}{k+1}\rceil$ guards will suffice to form an eternal distance-$k$ dominating family on $P_n$, by the strategy given in  Proposition~\ref{eqn:upper}. To prove the lower bound, we will show that  $\lceil \frac{n}{k+1} \rceil -1$ guards is insufficient. 

Let $P_{(j)}$ be the particular path that does not contain a guard, and denote the set of guards $\{g_0, g_1, .., g_{j-1},g_{j+1}, ..., g_{\lfloor \frac{n}{k+1}\rfloor}\}$ so that each $g_i$ is on $P_{(i)}$. 

Since $v_0$ is a leaf there must always be some guard within distance $k$ of $v_0$, and thus on $P_{(0)}$. The guard $g_0$ cannot leave $P_{(0)}$. Furthermore, no other guard will move into $P_{(0)}$, since $g_0$ can guard all the vertices of $P_{(0)}$. We consider the vertices in $P_{(0)}$ to be ``guarded." Next, in $P_{(1)}$, we make the same argument. Since $g_0$ cannot leave $P_{(0)}$, there must be some other guard within distance $k$ of $v_{1(k+1)}$, and thus $g_1$ must remain within $P_{(1)}$. Suppose some $P_{(j)}$ did not contain a guard. No guard from paths $P_{(0)}, ..., P_{(j+1)}$ can enter $P_{(j)}$, or else they will leave some vertex unguarded. If a guard from $P_{(j+1)}$ enters $P_{(j)}$, the distance from $g_{j+1}$ to the final vertex in $P_{(j+1)}$, $v_{(j+2)(k+1)-1}$ will be more than $k$, thus requiring the guards $g_{j+2},...,g_{\lfloor \frac{n}{k+1}\rfloor}$ to enter paths $P_{(j+1(},...P_{\left( \lfloor \frac{n}{k+1}\rfloor - 1 \right)}$, respectively. Since the distance $\textrm{dist}(v_{(\lfloor \frac{n}{k+1}\rfloor - 1)(k+1)}-1, v_{n-1}) > k$, no guard remains to defend the leaf $v_{n-1}$. In summary, if any path has no guard, when the guards move to defend that path, then they will leave a leaf undefended. So each path requires at least one guard, thus proving the lower bound.\qed\end{proof}

The previous results show that for some families of graphs, the eternal distance-$k$ domination number grows linearly with the order of the graph.  On the other end of the spectrum, we can easily characterize graphs with eternal distance-$k$ domination number $1$:  $\gk(G)=1$ if and only if the diameter of graph $G$ is at most $k$.

For any given $k$, if we fix the $k$-eternal domination number to be $1$, the size of the vertex set, $n$, can take on any value. One interesting result we can obtain from Theorem~\ref{thrm:paths} is the following. Let $P_{n,\ell}$ be $P_{n}$ with $\ell$ leaves added to a vertex adjacent to one of the leaves of $P_{n}$.

\begin{corollary}
For any given positive integers $k,z$ and $n$, with $n\geq z(k+1)$ there exists a graph on $n$ vertices whose eternal distance-$k$ domination number is $z$, namely $P_{z(k+1),\ell}$, where $\ell=n-z(k+1)$.
\end{corollary}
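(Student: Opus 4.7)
The plan is to establish matching upper and lower bounds on $\gk(P_{z(k+1),\ell})$, adapting the partition strategy from the proof of Theorem~\ref{thrm:paths}.

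For the upper bound, I would partition the spine $P_{z(k+1)} = (v_0, \ldots, v_{z(k+1)-1})$ into $z$ consecutive subpaths $P_{(0)}, P_{(1)}, \ldots, P_{(z-1)}$ of $k+1$ vertices each, and assign one guard to each subpath as in Theorem~\ref{thrm:paths}, except that the guard for $P_{(0)}$ is additionally responsible for the $\ell$ leaves attached to $v_1$. The key geometric observation is that the extended region $P_{(0)} \cup \{\text{added leaves}\}$ has diameter at most $k$: any added leaf is at distance $2$ from $v_0$ and at distance at most $k$ from $v_k$, and two added leaves are at distance $2$ from each other. Thus a single guard in this region can reach any attacked vertex within it in one move of length at most $k$, while the remaining $z-1$ guards defend $P_{(1)}, \ldots, P_{(z-1)}$ exactly as in Theorem~\ref{thrm:paths}.

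For the lower bound, I would argue by contradiction via a projection. Let $G = P_{z(k+1),\ell}$, let $H = P_{z(k+1)}$ be the spine viewed as an induced subgraph of $G$, and suppose $G$ admits an eternal distance-$k$ dominating family $\mathcal{E}$ of size $z-1$. Define $\pi : V(G) \to V(H)$ to be the identity on $V(H)$ and to send every added leaf to $v_1$. A short case analysis on whether $x,y$ are spine vertices or added leaves shows that $\pi$ is distance non-increasing, meaning $d_H(\pi(x), \pi(y)) \leq d_G(x, y)$ for all $x, y \in V(G)$; the essential point is that every path between an added leaf and another vertex must route through $v_1$ at the cost of an additional edge.

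Using this, I would conclude that $\{\pi(D) : D \in \mathcal{E}\}$ forms an eternal distance-$k$ dominating family of $H$ of size $z-1$. Domination is immediate: if a guard at $x \in D$ is within distance $k$ in $G$ of some $v \in V(H)$, then $d_H(\pi(x), v) \leq d_G(x, v) \leq k$. For transformations, given an attack at $v \in V(H)$ from configuration $\pi(D)$, I would lift to the same attack in $G$, invoke $\mathcal{E}$ to obtain a transformation $D \to D'$ with $v \in D'$, and push the guard-to-guard bijection forward through $\pi$ to get a valid transformation $\pi(D) \to \pi(D')$ in $H$ with $v \in \pi(D')$. This contradicts $\gk(H) = z$ from Theorem~\ref{thrm:paths}. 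I expect the main care to lie in formalizing how the guard bijection descends through $\pi$, but once the distance non-increasing property is in place the verification is mechanical.
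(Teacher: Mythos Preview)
Your approach is correct for $k\geq 2$ and, for the upper bound, matches the paper's in substance. Both invoke the partition from Theorem~\ref{thrm:paths} and observe that the added leaves can be absorbed into one guard's territory. The paper phrases this more tersely: it simply notes that whichever guard is currently dominating the spine leaf $v_{z(k+1)}$ is automatically at the same distance from every added leaf, so the eternal family already in use on the spine serves unchanged on the enlarged graph. Your diameter computation for $P_{(0)}\cup\{\text{added leaves}\}$ is an equivalent way to package the same observation (modulo attaching the leaves at the opposite end of the spine, which is immaterial by symmetry).

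Where your proof genuinely differs is the lower bound. The paper's proof does not address it at all; you supply an explicit distance non-increasing projection $\pi$ onto the spine and argue that an eternal family of size $z-1$ on $G$ would push forward to one on $P_{z(k+1)}$, contradicting Theorem~\ref{thrm:paths}. This is exactly the mechanism behind Lemma~\ref{lemma:subgraph} on retracts, which the paper only states in the following subsection, so your argument is in fact more self-contained than the paper's here. One small technical remark: your map $\pi$, sending the added leaves to $v_1$, is not a graph homomorphism (the edge from a leaf to $v_1$ collapses), though it is distance non-increasing, which is all your argument actually uses; sending the leaves to $v_0$ instead would make it a bona fide retraction if you want to align with Lemma~\ref{lemma:subgraph} verbatim.

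A caveat worth flagging: your diameter claim requires $k\geq 2$, since an added leaf sits at distance $2$ from $v_0$. For $k=1$ the corollary as literally stated is actually false (for instance $\gamma_{all,1}^\infty(P_{4,1})=3\neq 2$), so this is a defect of the statement rather than of your argument, and the paper's own proof has the same limitation.
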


\begin{proof}
For a given $k$ and any positive integer $z$ and any positive integer $n\geq z(k+1)$ consider the path $P_{z(k+1)}$. This has $z(k+1)$ vertices and from Theorem~\ref{thrm:paths} we know that the eternal distance-$k$ domination number is $z$. Place a distance-$k$ dominating set on this graph so that it is eternally distance-$k$ dominated. Label the vertices of the path $v_1,v_2,...,v_{z(k+1)}$, with $v_1$ and $v_{z(k+1)}$ being the leaves. Add $n-z(k+1)$ leaves to vertex $v_{z(k+1)-1}$. The guard that is dominating $v_{z(k+1)}$ also dominates these new leaves. Thus, this new graph has $n$ vertices and eternal distance-$k$ domination number $z$.
\qed\end{proof}

An important question to ask when investigating a graph parameter is, how difficult is it to compute? To answer this, we will look at the relationship between the eternal distance-$k$ domination number of a graph and its graph power. For a graph $G$, the $k^{\textrm{th}}$ power of the graph, $G^k$, is formed by adding an edge $u,v\in E(G)$ whenever $\textrm{dist}(u,v)\leq k$. Thus, if there exists a path in $G$ from $u$ to $v$ of length at most $k$, then we will witness an edge $uv \in E(G^k)$.

\begin{theorem}\label{corr:gkiff}
Let $G$ be a graph and $k\in \mathbb{N}$. Let $S\subseteq V(G)$. Then, $S$ is an eternal distance-$k$ dominating set of $G$ if and only if $S$ is an eternal dominating set of $G^k$.
\end{theorem}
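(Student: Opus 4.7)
The plan is to show that the eternal distance-$k$ domination game on $G$ and the standard eternal domination game on $G^k$ are essentially the same game under a transparent correspondence, so that eternal dominating families in the two settings coincide. The proof reduces to checking that both the static requirement (being a dominating set) and the dynamic requirement (legal guard movement) translate faithfully between $G$ and $G^k$.

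First I would verify the static correspondence: a multiset $S \subseteq V(G)$ is a distance-$k$ dominating set of $G$ if and only if $S$ is a dominating set of $G^k$. This is immediate from the definition of $G^k$, since $uv \in E(G^k)$ precisely when $d_G(u,v) \leq k$, so the closed neighbourhood of $v$ in $G^k$ equals $N_k[v]$ in $G$. Hence $\mathbb{D}_{k,q}(G) = \mathbb{D}_{1,q}(G^k)$ as families of multisets.

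Next I would verify the dynamic correspondence: for $D, D' \in \mathbb{D}_{k,q}(G)$, writing $D=\{v_1,\ldots,v_q\}$ and $D'=\{u_1,\ldots,u_q\}$, the set $D$ transforms to $D'$ in $G$ (in the distance-$k$ sense) if and only if $D$ transforms to $D'$ in $G^k$ (in the ordinary eternal-domination sense). By Definition of transform, the first condition is $u_i \in N_k[v_i]$ for every $i$, while the second is $u_i \in N_{G^k}[v_i]$ for every $i$. These conditions are identical by the same neighbourhood identification.

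Finally I would combine the two correspondences to complete the biconditional. Given an eternal distance-$k$ dominating family $\mathcal{E}\subseteq \mathbb{D}_{k,q}(G)$ containing $S$, the same collection $\mathcal{E}$ viewed as a subfamily of $\mathbb{D}_{1,q}(G^k)$ is closed under response to any attack $v \in V(G)=V(G^k)$: for each $D\in\mathcal{E}$ there exists $D'\in\mathcal{E}$ with $v\in D'$ and $D$ transforming to $D'$, and by the dynamic correspondence this is exactly a legal eternal-domination response in $G^k$. So $\mathcal{E}$ witnesses that $S$ is eternally dominating in $G^k$. The reverse direction is symmetric. I do not expect any genuine obstacle here — the entire content of the theorem is that the two game definitions are syntactically the same once one uses the neighbourhood identification $N_k[v]=N_{G^k}[v]$; the only minor care needed is that multisets (not just sets) are allowed on both sides, which is fine since both definitions in the paper explicitly permit guards to share a vertex.
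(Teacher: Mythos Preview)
Your proposal is correct and follows essentially the same approach as the paper: both arguments rest entirely on the identification $N_k[v]=N_{G^k}[v]$, which makes the static (domination) and dynamic (transform) conditions coincide in $G$ and $G^k$. The only cosmetic difference is that you phrase the argument at the level of eternal dominating families, while the paper phrases it operationally by mirroring guard moves along a sequence of attacks; the underlying content is identical.
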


\begin{proof}
Let $S$ be an eternal $k$-dominating set of $G$ for which $|S|\geq \gk(G)$. Place guards on the vertices of $S$ in $G^k$ and suppose a sequence of attacks, $(a_1,a_2,\ldots a_\ell) \subseteq V(G^k)$ occur. 

Consider the same sequence of attacks on $G$ and for each guard $g_i$, let $d_0$ be the vertex $g_i$ initially occupies in $G$ and let $\mathcal{G}_i~=~(d_1,d_2,\ldots d_\ell) \subseteq V(G)$ be the set of corresponding defending moves the guard makes, that is guard $g_i$ moves from $d_{j-1}$ to $d_j$ in $G$ after attack $a_j$. We now consider the eternal 1-domination of $G^k$ by using corresponding moves of eternal distance-$k$ domination of $G$. For any guard $g_i$ and their sequence $\mathcal{G}_i$, moving from $d_{j-1}$ to $d_j$ in $G^k$ is permissible as these vertices have distance at most $k$ in $G$, and thus are adjacent in $G^k$. Each guard $g_i$ can use the same sequence of moves and still guard $G^k$.

Similarly, let $S'$ be an eternal $1$-dominating set of $G^k$ for which $|S'|\geq \gamma_{all,1}^{\infty}(G^k)$. Place guards on the vertices of $S'$ in $G$ and suppose a sequence of attacks $(a_1,a_2,\ldots a_\ell) \subseteq V(G)$ occur. 

Consider the same sequence of attacks on $G^k$ and for each guard $g_i$ in $G^k$, we define $\mathcal{G}_i$ analogously as above. We then consider the eternal distance-$k$ domination of $G$, using moves from the eternal distance-$1$ domination in $G^k$. Any guard $g_i$ that moves from $d_{j-1}$ to $d_j$ after an attack $a_j$ in $G^k$ can also move from $d_{j-1}$ to $d_j$ in $G$ since these two vertices are adjacent in $G^k$ and thus are at most distance $k$ in $G$, hence guard in $G$, giving the desired result.
\qed\end{proof}

\begin{proof}
Let $S$ be an eternal $k$-dominating set of $G$ for which $|S|\geq \gk(G)$. Suppose a sequence of attacks, $(a_1,a_2,\ldots a_\ell) \subseteq V(G)$ occur. For each guard $g_i$, let $d_{i,0}$ be the vertex they are initially placed on and let $\mathcal{G}_i~=~(d_1,d_2,\ldots d_\ell) \subseteq V(G)$ be the set of corresponding defending moves the guard makes, that is guard $g_i$ moves from $d_{j-1}$ to $d_j$ after attack $a_j$.  We now consider the eternal 1-domination of $G^k$ by using corresponding moves of the eternal $k$-domination of $G$. Place the guards in $S$ on the vertices of $G^k$, since $V(G^k)=V(G)$. Suppose in $G^k$ the same sequence of attacks occur on vertices $(a_1,a_2,\ldots a_\ell)$. 

For any guard $g_i$ and their sequence $\mathcal{G}_i$, moving from $d_{j-1}$ to $d_j$ in $G^k$ is permissible as these vertices have distance at most $k$ in $G$, and thus are adjacent in $G^k$. Each guard $g_i$ can use the same sequence of moves and still guard $G^k$.

Similarly, let $S'$ be an eternal $1$-dominating set of $G^k$ for which $|S'|\geq \gamma_{all,1}^{\infty}(G^k)$. Suppose a sequence of attacks $\{a_1,a_2,\ldots a_\ell\}$ occur. For each guard $g_i$, we define $\mathcal{G}_i$ analogously as above. We then consider the eternal $k$-domination of $G$, using moves from the eternal $1$-domination in $G^k$. Place the guards in $S'$ on the vertices of $G$ and consider the eternal $k$-domination process. Again suppose in $G$ the vertices $(a_1,a_2,\ldots a_\ell)$ are attacked in that order. Any guard $g_i$ that moves from $d_{j-1}$ to $d_j$ after an attack $a_j$ in $G^k$ can also move from $d_{j-1}$ to $d_j$ in $G$ since these two vertices are adjacent in $G^k$ and thus are at most distance $k$ in $G$, hence guard in $G$, giving the desired result.
\qed\end{proof}

\begin{corollary} \label{gpower}
If $G$ is a graph and $k \in \mathbb{N}$, then \[\gk(G) = \gamma_{all,1}^{\infty}(G^k).\]
\end{corollary}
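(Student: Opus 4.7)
The plan is to obtain Corollary \ref{gpower} as an immediate consequence of Theorem \ref{corr:gkiff}, since that theorem establishes a set-level equivalence and the two parameters are by definition the minimum cardinalities of such sets. Because $V(G^k) = V(G)$, any multiset of vertices of $G$ is simultaneously a candidate multiset for $G^k$, so the minimization takes place over essentially the same collection of multisets, weighted by cardinality.

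More concretely, first I would consider a minimum eternal distance-$k$ dominating set $S$ of $G$, so $|S| = \gk(G)$. By the forward direction of Theorem \ref{corr:gkiff}, $S$ is an eternal dominating set of $G^k$, hence $\gamma_{all,1}^{\infty}(G^k) \leq |S| = \gk(G)$. Next I would take a minimum eternal dominating set $S'$ of $G^k$, so $|S'| = \gamma_{all,1}^{\infty}(G^k)$. By the reverse direction of Theorem \ref{corr:gkiff}, $S'$ is an eternal distance-$k$ dominating set of $G$, which gives $\gk(G) \leq |S'| = \gamma_{all,1}^{\infty}(G^k)$. Combining the two inequalities yields the stated equality.

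There is no real obstacle here: all of the content is carried by Theorem \ref{corr:gkiff}, and the only thing to check is that the notion of ``size'' is preserved in both directions of the correspondence, which is automatic since the set $S$ transferred from one game to the other is literally the same multiset of vertices. One minor remark I would include is that the multiset convention from Definition \ref{defn:kdom} causes no difficulty, because repetitions are preserved when viewing $S$ as a multiset of vertices of $G^k$ rather than of $G$.
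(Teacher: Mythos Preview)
Your proposal is correct and matches the paper's approach: the paper states Corollary~\ref{gpower} without an explicit proof, treating it as an immediate consequence of Theorem~\ref{corr:gkiff}, and your argument simply spells out the two obvious inequalities one extracts from that set-level equivalence.
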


In \cite{chipcomplexity} and the subsequent errata \cite{chipwebsite}, it was shown that deciding whether  a set of vertices of a graph is an eternal domination set is in EXP. Thus, taken with Theorem~\ref{corr:gkiff} we have the following complexity result.

\begin{corollary}\label{complexity}
Let $G$ be a graph of order $n$, $k$ be a positive integer with $0\leq k\leq n$, and $S\subseteq V(G)$. Deciding whether $S$ is an eternal distance-$k$ dominating set for $G$ is in EXP.
\end{corollary}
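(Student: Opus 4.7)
The plan is to invoke Theorem~\ref{corr:gkiff} to reduce the problem of deciding eternal distance-$k$ domination in $G$ to the (already classified) problem of deciding eternal $1$-domination in the auxiliary graph $G^k$. More precisely, given an instance $(G,k,S)$, I would first construct $G^k$, then ask whether $S$ is an eternal dominating set (in the ordinary $k=1$ sense) of $G^k$. By Theorem~\ref{corr:gkiff}, the answer to the original question is yes if and only if the answer to this derived question is yes.

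The first step is to verify that the reduction is efficient. The graph $G^k$ has the same vertex set as $G$ and its edge set is determined by computing all pairwise distances in $G$ (for instance, via BFS from each vertex, or any standard all-pairs shortest-path routine), which takes time polynomial in $n$. Thus $G^k$ can be produced in polynomial time from $(G,k)$, and the set $S$ is carried over unchanged. Next, I would cite the result of \cite{chipcomplexity} together with the errata \cite{chipwebsite}, which states that deciding whether a given set is an eternal dominating set of an input graph lies in EXP. Applying this algorithm to the instance $(G^k,S)$ yields a decision procedure for the original problem.

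Finally, I would argue about the complexity bound itself. Composing a polynomial-time reduction with an EXP decision procedure gives an EXP algorithm overall, since the size of $G^k$ is polynomial in the size of $G$ and EXP is closed under polynomial-time (many-one) reductions. I would state this closure property explicitly to avoid ambiguity, and then conclude that deciding whether $S$ is an eternal distance-$k$ dominating set of $G$ is in EXP, completing the proof.

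The main obstacle, if any, is essentially bookkeeping rather than mathematical: one must take care that the reduction from $G$ to $G^k$ really is polynomial-time (not merely polynomial in output size), and that the known complexity result for ordinary eternal domination is stated in a form that applies to arbitrary input sets $S$, not just to sets of minimum size. Both points are routine, so the corollary really is an immediate consequence of Theorem~\ref{corr:gkiff} and the prior complexity result, and no new algorithmic ideas are required.
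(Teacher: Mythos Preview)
Your proposal is correct and follows exactly the paper's approach: reduce via Theorem~\ref{corr:gkiff} to the ordinary eternal domination problem on $G^k$ and then invoke the EXP result of \cite{chipcomplexity,chipwebsite}. The paper in fact gives only the one-line justification preceding the corollary, so your added remarks about the polynomial-time construction of $G^k$ and closure of EXP under polynomial reductions simply make explicit what the paper leaves implicit.
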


\subsection{Using trees to bound $\gk$}\label{section_superstar}

In this section, we provide insight to the vertices ``guarded'' by a single guard or a pair of guards, and present bounds on the eternal distance-$k$ domination number for arbitrary graphs based on a partitioning into vertex-disjoint trees.

A subgraph $H$ of a graph $G$ is a {\it retract} of $G$ if there is a homomorphism $f$ from $G$ to $H$ so that $f(x)=x$ for $x \in V(H)$.  The map $f$ is called a {\it retraction} and we note that since this is an edge-preserving map to an induced subgraph, the distance between any two vertices does not increase in the image.  

\begin{lemma}\label{lemma:subgraph}Let $H$ be a retract of graph $G$.  Then $\gk(H) \leq \gk(G)$. \end{lemma}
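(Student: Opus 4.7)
The plan is to use the retraction $f\colon G \to H$ to transfer an eternal distance-$k$ dominating strategy from $G$ to $H$. The two properties of $f$ I would use are that $f$ fixes $V(H)$ pointwise and that, as an edge-preserving map, $f$ cannot increase distances: $d_H(f(x),f(y)) \leq d_G(x,y)$ for all $x,y \in V(G)$. Both follow from the hypothesis that $H$ is a retract, together with the fact that $H$ is an induced subgraph.

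First I would fix an optimal eternal distance-$k$ dominating family $\mathcal{E}$ on $G$ with each set of cardinality $\gk(G)$, and define the candidate family on $H$ by $\mathcal{E}' = \{f(D) : D \in \mathcal{E}\}$, where $f$ is applied element-wise to multisets. I would then verify that each $f(D)$ is a distance-$k$ dominating set of $H$: given any $v \in V(H)$, viewed as a vertex of $G$ there is some $d \in D$ with $d_G(v,d) \leq k$, and then $d_H(v, f(d)) = d_H(f(v), f(d)) \leq d_G(v,d) \leq k$, with $f(d) \in f(D)$.

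Next I would verify the eternal property. Suppose guards occupy $f(D)$ in $H$, where guard $g_i$ stands on $f(v_i)$ for $D = \{v_1,\dots,v_q\}$, and an attack occurs at a vertex $a \in V(H)$. Since $V(H) \subseteq V(G)$, I can regard $a$ as an attack in $G$; the family $\mathcal{E}$ then supplies some $D' = \{u_1,\dots,u_q\}$ containing $a$ such that $D$ transforms to $D'$ in $G$, meaning $d_G(v_i,u_i)\leq k$ for each $i$. In $H$, I move guard $g_i$ from $f(v_i)$ to $f(u_i)$, which is legal because $d_H(f(v_i),f(u_i)) \leq d_G(v_i,u_i) \leq k$; the resulting multiset is $f(D') \in \mathcal{E}'$, which contains $f(a)=a$ since $a\in V(H)$. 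Hence $\mathcal{E}'$ is an eternal distance-$k$ dominating family on $H$ of the required cardinality.

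No step involves difficult calculation; the only subtlety to flag is that $f$ may collapse distinct vertices, so $f(D)$ may contain repeated entries. This is exactly why Definition~\ref{defn:kdom} was framed in terms of multisets and why the paper allows multiple guards to share a vertex; multiplicities only drop under $f$, which is consistent with the desired inequality $\gk(H) \leq \gk(G)$. The conceptual content of the lemma is the simultaneous observation that a retraction preserves both the distance-$k$ domination property and the bounded-distance movement condition in one stroke.
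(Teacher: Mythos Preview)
Your proof is correct and follows essentially the same approach as the paper: shadow an optimal strategy on $G$ through the retraction $f$, using that $f$ fixes $V(H)$ and does not increase distances. Your version is somewhat more explicit in working through the family definition and checking that each $f(D)$ distance-$k$ dominates $H$; the one minor slip is the phrase ``multiplicities only drop under $f$'' (they can only rise, while cardinality is preserved), but this does not affect the argument.
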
  

\begin{proof} Let $H$ be a retract of graph $G$ with the retraction $f:G \rightarrow H$.  We consider two parallel incidences of eternal distance-$k$ domination: one on $G$ and one on $H$. Assume we have a strategy for the guards to defend against any sequence of attacks in $G$, and we will construct a such a strategy in $H$. 

Initially, if there is a guard at vertex $v \in V(G)$, then we place a guard at vertex $f(v) \in V(H)$.  If a guard in $G$ moves from $x$ to $y$ in response to an attack, we observe that guard may move from $f(x)$ to $f(y)$ in $H$ in response to the attack. Regardless of whether $f(v) \in V(H)$ for a particular move of a guard in $H$, the guard in $H$ is either on the attack, or at a distance close enough to the subsequent attack, since retractions cannot increase distances. Thus, $\gk(H) \leq \gk(G)$.\qed\end{proof}

For a tree, we can consider the retraction from $T$ to some subgraph $T' = T[V\backslash D]$ with $D \subseteq V(T)$ by retracting any vertex $v \notin V(T')$ to its nearest neighbour in $V(T')$. This path is unique, and in subsequent sections we will use a shadow strategy for the guards along the retraction as above.

Below we consider another subgraph that will prove useful in obtaining an upper bound for $\gk(G)$ for an arbitrary graph $G$.  Observe that any strategy that protects $G-e$ also protects $G$.

\begin{lemma}\label{Lem:removeedge}
Let $G$ be a graph, $k$ a positive integer and $e\in E(G)$. Then \[\gamma_{all,k}^\infty(G-e)\geq \gamma_{all,k}^\infty(G).\] 
\end{lemma}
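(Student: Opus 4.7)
\bigskip

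\noindent\textbf{Proof plan.} The plan is to show that any eternal distance-$k$ dominating family for $G-e$ is automatically an eternal distance-$k$ dominating family for $G$; taking the minimum over such families then gives the desired inequality. The key observation is that adding an edge can only shrink distances: for every pair $u,v\in V(G)=V(G-e)$ we have $d_G(u,v)\le d_{G-e}(u,v)$, and consequently $N_k^{G-e}[x]\subseteq N_k^G[x]$ for every $x$.

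First I would record two immediate consequences of the distance inequality. (i) If $D\subseteq V(G-e)$ is a distance-$k$ dominating set of $G-e$, then every vertex in $V(G-e)\setminus D$ lies within $G-e$-distance $k$ of $D$, hence within $G$-distance $k$ of $D$; so $D$ is a distance-$k$ dominating set of $G$. (ii) If $D=\{v_1,\dots,v_q\}$ transforms to $D'=\{u_1,\dots,u_q\}$ in $G-e$, i.e.\ $u_i\in N_k^{G-e}[v_i]$ for each $i$, then $u_i\in N_k^G[v_i]$, so $D$ transforms to $D'$ in $G$.

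Next I would apply these two facts to the eternal game. Let $\mathcal{E}\subseteq \mathbb{D}_{k,q}(G-e)$ be an eternal distance-$k$ dominating family of $G-e$ witnessing $\gamma_{all,k}^\infty(G-e)=q$. By (i), every $D\in\mathcal{E}$ is a member of $\mathbb{D}_{k,q}(G)$. Now consider any $D\in\mathcal{E}$ and any attack $v\in V(G)=V(G-e)$: since $\mathcal{E}$ is an eternal family for $G-e$, there exists $D'\in\mathcal{E}$ with $v\in D'$ and $D$ transforms to $D'$ in $G-e$; by (ii) the same transformation is legal in $G$. Thus $\mathcal{E}$ is itself an eternal distance-$k$ dominating family of $G$ of cardinality $q$, giving $\gamma_{all,k}^\infty(G)\le q=\gamma_{all,k}^\infty(G-e)$.

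I expect no serious obstacle: the whole argument is the monotonicity of both the domination condition and the guard-movement condition under the pointwise distance inequality $d_G\le d_{G-e}$. The only subtlety worth flagging is that the vertex sets of $G$ and $G-e$ coincide, so there is no issue with attacks occurring on endpoints of $e$ or with reinterpreting guard positions across the two graphs; the ``same'' family and the ``same'' responses transfer verbatim from $G-e$ to $G$.
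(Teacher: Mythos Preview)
Your proposal is correct and follows essentially the same approach as the paper: the paper's proof is a one-line version of your argument, observing that any guard movement allowed in $G-e$ is also allowed in $G$, so a defending strategy in $G-e$ works verbatim in $G$. Your version simply makes explicit the two monotonicity facts (domination and transformation both transfer from $G-e$ to $G$) that the paper leaves implicit.
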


\begin{proof} 
Any edge a guard moves along in $G-e$ is also an allowable movement in $G$. Thus, a strategy for the guards to defend against any sequence of attacks in $G-e$ will also suffice for $G$. \qed\end{proof}

From repeated applications of Lemma~\ref{Lem:removeedge} we obtain the following. 

\begin{corollary}\label{cor:span}
Let $G$ be a graph and $T$ a spanning tree of $G$. Then
\[ \gamma_{all,k}^\infty(T)\geq \gamma_{all,k}^\infty(G).\] 
\end{corollary}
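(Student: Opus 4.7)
The statement is essentially an immediate consequence of Lemma~\ref{Lem:removeedge}, so the plan is very short and the main work is just bookkeeping. My approach is to iteratively delete the non-tree edges of $G$ one at a time, applying the lemma at each step.

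More concretely, I would enumerate the non-tree edges as $E(G)\setminus E(T) = \{e_1, e_2, \ldots, e_m\}$, where $m = |E(G)| - (|V(G)|-1)$, and build the sequence of graphs $G_0 = G$ and $G_i = G_{i-1} - e_i$ for $1 \le i \le m$. Since $T$ is a spanning tree of $G$, we have $V(G_i) = V(G)$ for all $i$ and $G_m = T$. Applying Lemma~\ref{Lem:removeedge} to the edge $e_i$ in the graph $G_{i-1}$ yields
\[
\gamma_{all,k}^\infty(G_{i-1}) \;\le\; \gamma_{all,k}^\infty(G_i)
\]
for each $i \in \{1,2,\ldots,m\}$. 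Chaining these $m$ inequalities gives $\gamma_{all,k}^\infty(G) \le \gamma_{all,k}^\infty(T)$, which is the desired conclusion. Equivalently, one could phrase this as a straightforward induction on $|E(G)| - |E(T)|$, with the base case $E(G) = E(T)$ being trivial and the inductive step being a single invocation of Lemma~\ref{Lem:removeedge}.

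There is really no obstacle to overcome: Lemma~\ref{Lem:removeedge} imposes no connectedness hypothesis, so I don't need to worry about the order in which the non-tree edges are removed or about intermediate graphs remaining connected. (In fact, connectedness is preserved throughout, since each $e_i$ lies on a cycle in $G_{i-1} \cup T \supseteq T$ and is therefore not a bridge of $G_{i-1}$, but this observation is not needed for the argument.) The only thing to be careful about in the write-up is simply to state the telescoping chain of inequalities clearly and to note that after deleting all non-tree edges the resulting graph is precisely $T$.
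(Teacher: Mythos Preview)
Your proposal is correct and matches the paper's approach exactly: the paper simply states that the corollary follows from repeated applications of Lemma~\ref{Lem:removeedge}, and your write-up spells out precisely that telescoping argument.
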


Corollary~\ref{cor:span} suggests that understanding the eternal distance-$k$ domination process on trees is important, as it provides an upper bound on the eternal distance-$k$ domination number of a general graph. We next consider covering a graph $G$ with subtrees that have a particular structure, each of which can be guarded by a single guard. By cover, we mean that a guard is assigned to a particular subtree and they can respond to any sequence of attacks that occur within that particular subtree.

\begin{definition} A {\bf depth-$k$-rooted tree} with $k \in \mathbb{Z}^+$, is a rooted tree where the eccentricity of the root is at most $k$.\end{definition}

\begin{definition}  Given a graph $G$, we define a {\bf depth-$k$-rooted tree decomposition} to be a partition of the vertices into sets $S_i$ for $1 \leq i \leq \ell$ such that $G[S_i]$ contains a spanning subgraph that is depth-$k$-rooted tree. 

The {\bf depth-$k$-rooted tree decomposition number} of a graph $G$, denoted $\mathfrak{T}_k(G)$, is the minimum number of sets $S_i$ over all possible decompositions.\end{definition}

An easy bound comes from partitioning graph $G$ into depth-$\lfloor \frac{k}{2}\rfloor$-rooted trees, as one guard defend against a sequence of attacks at vertices of a depth-$\lfloor \frac{k}{2}\rfloor $-rooted tree.  

\begin{corollary}
For any graph $G$ and $k\geq 2$
\[\gk(G) \leq \mathfrak{T}_{\lfloor \frac{k}{2}\rfloor} (G). \]
\end{corollary}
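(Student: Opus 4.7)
The plan is to exhibit an explicit eternal distance-$k$ guarding strategy using exactly $\mathfrak{T}_{\lfloor k/2\rfloor}(G)$ guards. Fix an optimal depth-$\lfloor k/2\rfloor$-rooted tree decomposition with parts $S_1,\ldots,S_\ell$ and, for each $i$, a spanning depth-$\lfloor k/2\rfloor$-rooted subtree $T_i$ of $G[S_i]$ with root $r_i$. Since $T_i$ is a subgraph of $G$, distances satisfy $d_G(u,v) \leq d_{T_i}(u,v)$ for all $u,v \in S_i$, so every vertex of $S_i$ already lies in $N_{\lfloor k/2\rfloor}[r_i]$ measured in $G$.

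I would place one guard $g_i$ at each root $r_i$; because $\lfloor k/2\rfloor \leq k$, this initial placement is a distance-$k$ dominating set. The invariant I intend to preserve is that guard $g_i$ always occupies a vertex of $S_i$. When a vertex $v \in S_i$ is attacked, only guard $g_i$ moves. Letting $u$ denote the vertex currently occupied by $g_i$, both $u$ and $v$ lie in $N_{\lfloor k/2\rfloor}[r_i]$, so the triangle inequality gives $d_G(u,v) \leq 2\lfloor k/2\rfloor \leq k$, and the move is legal.

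It remains to confirm that the resulting configuration is again a distance-$k$ dominating set. For $j \neq i$, the guard $g_j$ has not moved and still lies in $S_j$, so every vertex $x \in S_j$ satisfies $d_G(x,g_j) \leq 2\lfloor k/2\rfloor \leq k$ by the same triangle-inequality argument. For $x \in S_i$, we have $d_G(x,v) \leq d_G(x,r_i) + d_G(r_i,v) \leq 2\lfloor k/2\rfloor \leq k$. Hence the invariant is maintained and the strategy defends against any infinite sequence of attacks, giving $\gk(G) \leq \ell = \mathfrak{T}_{\lfloor k/2\rfloor}(G)$.

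There is no real obstacle here; the argument is a direct parallelization of the single-guard strategy already used in the proof of Proposition~\ref{eqn:upper}, run independently on each piece of the decomposition. The only points that require care are distinguishing distances measured in the subtree $T_i$ from those measured in the ambient graph $G$ (the inequality always favors us), and observing that $2\lfloor k/2\rfloor \leq k$ for every integer $k$, with equality exactly when $k$ is even.
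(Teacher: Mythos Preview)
Your proof is correct and follows exactly the approach the paper intends: the paper states the corollary with only the one-sentence justification that ``one guard can defend against a sequence of attacks at vertices of a depth-$\lfloor k/2\rfloor$-rooted tree,'' and your argument is a careful unpacking of precisely that idea, mirroring the strategy from Proposition~\ref{eqn:upper}.
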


For some graphs, a better bound can be achieved by partitioning graph $G$ into depth-$k$-rooted trees and recognizing that $2$ guards can protect the vertices of each depth-$k$-rooted tree. As a simple example, consider a perfect $m$-ary tree of depth $2$ for $m>2$ (perfect $m$-ary trees are defined in Section~\ref{sec:mary}): the above corollary tells us the eternal distance-$2$ domination number is at most $m$ whereas the next corollary tells us it is at most $2$.

\begin{proposition}\label{propkroot} If $T$ is a depth-$k$-rooted tree for some $k \geq 2$, then $\gk(T) \leq 2$.\end{proposition}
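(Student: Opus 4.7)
The plan is to exhibit an explicit eternal distance-$k$ dominating family of size $2$ built around the root. Let $r$ be a root of $T$ witnessing $\epsilon_T(r)\le k$. Because $\{r\}$ by itself is already a distance-$k$ dominating set of $T$, every multiset of the form $\{r,x\}$ with $x\in V(T)$ is automatically distance-$k$ dominating. I will therefore take as my candidate family
\[
\mathcal{E}=\{\,\{r,x\}:x\in V(T)\,\}\subseteq \mathbb{D}_{k,2}(T),
\]
with initial configuration $\{r,r\}$, which is legitimate since, as noted after Definition~\ref{defn:kdom}, distance-$k$ dominating sets are multisets and two guards may share a vertex.

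The heart of the argument is the transformation rule, and this is where the eccentricity hypothesis does all the work. Given a current configuration $D=\{r,x\}\in\mathcal{E}$ and an attack at $v\in V(T)$, I will swap the guards through the root: send the guard currently at $r$ to $v$, and simultaneously send the guard at $x$ to $r$. Both moves have length at most $k$, because $d(r,v)\le\epsilon_T(r)\le k$ and $d(x,r)\le\epsilon_T(r)\le k$. The resulting multiset $D'=\{r,v\}$ again lies in $\mathcal{E}$ and contains the attacked vertex, so $D$ transforms to $D'$ as required by Definition~\ref{def:kdomfamily}. Iterating this rule produces a valid defense against any attack sequence, so $\mathcal{E}$ is an eternal distance-$k$ dominating family and $\gk(T)\le 2$.

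There is essentially no obstacle here: the entire proof collapses to one observation, namely that the eccentricity bound on $r$ lets one guard travel from $r$ to any attacked vertex while the other guard travels from wherever it was back to $r$, all within budget $k$. The only detail to keep in mind is the multiset convention, which legitimizes both the initial doubled-up placement at $r$ and any recurring configurations $\{r,r\}$ that arise when the attack happens to fall on $r$ or when $x=r$.
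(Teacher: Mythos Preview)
Your proof is correct and follows essentially the same approach as the paper: keep one guard at the root $r$, and in response to each attack send the root guard to the attacked vertex while the other guard returns to $r$. The only cosmetic difference is that you start with both guards stacked at $r$, whereas the paper starts with the second guard at an arbitrary vertex $u$; since both configurations lie in the same family $\mathcal{E}=\{\{r,x\}:x\in V(T)\}$ and use the identical swap-through-root rule, the arguments are the same.
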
 

 \begin{proof}Let $T$ be a depth-$k$-rooted tree with root $r$.  Initially place one guard at $r$ and the second guard at an arbitrary vertex, $u$.  After a vertex $v$ is attacked, the guard at $r$ moves to $v$ and the guard at $u$ moves to $r$.  The resulting distance-$k$ dominating set is equivalent to the original, and the guards can respond to attacks in this manner indefinitely.\qed\end{proof} 
 
 \begin{corollary} For any graph $G$ and $k \geq 2$, $$\gk(G) \leq \min\{ 2\cdot\mathfrak{T}_{k}(G), \mathfrak{T}_{\lfloor k/2 \rfloor} (G) \}.$$ \end{corollary}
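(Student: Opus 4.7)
The bound $\gk(G)\leq\mathfrak{T}_{\lfloor k/2\rfloor}(G)$ is already established by the previous corollary, so the plan is to establish only the complementary inequality $\gk(G)\leq 2\,\mathfrak{T}_k(G)$; the stated corollary then follows by taking the minimum of the two upper bounds.

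To obtain $\gk(G)\leq 2\,\mathfrak{T}_k(G)$, I would fix a depth-$k$-rooted tree decomposition of $G$ achieving the minimum, say into parts $S_1,\ldots,S_\ell$ with $\ell=\mathfrak{T}_k(G)$, and for each $i$ let $T_i$ be a depth-$k$-rooted spanning tree of $G[S_i]$ with root $r_i$. Assign a dedicated pair of guards $(g_i^{(1)},g_i^{(2)})$ to each part $S_i$; this uses $2\ell=2\,\mathfrak{T}_k(G)$ guards in total. The initial placement puts $g_i^{(1)}$ on $r_i$ and $g_i^{(2)}$ anywhere in $S_i$, exactly as in Proposition~\ref{propkroot}.

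When a vertex $v\in V(G)$ is attacked, it belongs to a unique $S_i$, and the pair $(g_i^{(1)},g_i^{(2)})$ responds using the strategy from Proposition~\ref{propkroot} applied to $T_i$: the guard at the root $r_i$ moves to $v$ along the tree path in $T_i$ (distance at most $k$, hence a legal move in $G$ as well), and the other guard of the pair moves onto $r_i$; all guards assigned to other parts remain stationary. After this response, the pair $(g_i^{(1)},g_i^{(2)})$ once again has a guard on $r_i$ and a guard somewhere in $S_i$, so the configuration is structurally identical to the initial one and can defend against any further attack by the same rule. Since the union of the $S_i$ is $V(G)$, every attack is handled, so the chosen set of $2\,\mathfrak{T}_k(G)$ vertices is an eternal distance-$k$ dominating set.

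The plan carries essentially no obstacle: the substantive content is packaged in Proposition~\ref{propkroot}, and the only point requiring care is confirming that the movement within $T_i$ (distance at most $k$ in $T_i$) remains a legal distance-$k$ move in $G$, which is immediate since $T_i$ is a subgraph of $G$ and hence $d_G\leq d_{T_i}$. Combining this with the previously proved bound $\gk(G)\leq\mathfrak{T}_{\lfloor k/2\rfloor}(G)$ yields the claimed minimum.
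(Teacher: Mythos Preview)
Your proposal is correct and follows exactly the argument the paper intends: the corollary is stated without proof in the paper because it simply combines the preceding corollary (giving the $\mathfrak{T}_{\lfloor k/2\rfloor}(G)$ bound) with Proposition~\ref{propkroot} applied independently to each part of an optimal depth-$k$-rooted tree decomposition (giving the $2\,\mathfrak{T}_k(G)$ bound). Your write-up just makes explicit the per-part two-guard strategy and the observation $d_G\leq d_{T_i}$ that the paper leaves to the reader.
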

 
 In light of Corollary~\ref{cor:span} and the fact that we can partition a graph $G$ into depth-$k$-rooted trees in order to find an upper bound for $\gk(G)$, the next two sections will focus on trees.

\section{Eternal distance-$k$ domination on trees}\label{sec:trees}

In the aim of working towards determining $\gk(T)$ for any tree $T$, we consider the relationship between the eternal distance-$k$ domination number of a tree and particular subtrees. We focus on finding the specific conditions for the subtrees that allow for inductive strategies for $\gk(T)$.

In~\cite{KlosterM}, the authors provide a linear-time algorithm for determining the eternal $1$-domination number of a tree.  Their algorithm consists of repeatedly applying two reductions, {\bf R1} and {\bf R2}, which we restate here. \medskip

\noindent {\bf R1}: Let $x$ be a vertex of $T$ incident to at least two leaves and to exactly one vertex of degree at least two.  Delete all leaves incident to $x$. \medskip

\noindent {\bf R2}: Let $x$ be a vertex of degree two in $T$ such that $x$ is adjacent to exactly one leaf, $y$.  Delete both $x$ and $y$. \medskip

\noindent If $T'$ is the result of applying either {\bf R1} or {\bf R2} to tree $T$, then by \cite{KlosterM} $$\gamma_{all,1}^\infty(T')=\gamma_{all,1}^\infty(T)-1.$$

We generalize the reductions of~\cite{KlosterM} to arbitrary $k\geq 1$, in Proposition~\ref{propR2} for {\bf R2} and Proposition~\ref{propR1} for {\bf R1} to work towards determining the eternal distance-$k$ domination number for trees -- the generalization of {\bf R2} precedes that of {\bf R1} because it is much simpler.   In particular, Propositions~\ref{propR2} and~\ref{propR1} provide substructures whose deletion decreases the eternal distance-$k$ domination number by one. Figure~\ref{fig:R1R2} provides a visualization of the subtrees described in Propositions~\ref{propR2} and~\ref{propR1} for $k=2$.

\begin{figure}[htbp]
 \[ \includegraphics[width=0.55\textwidth]{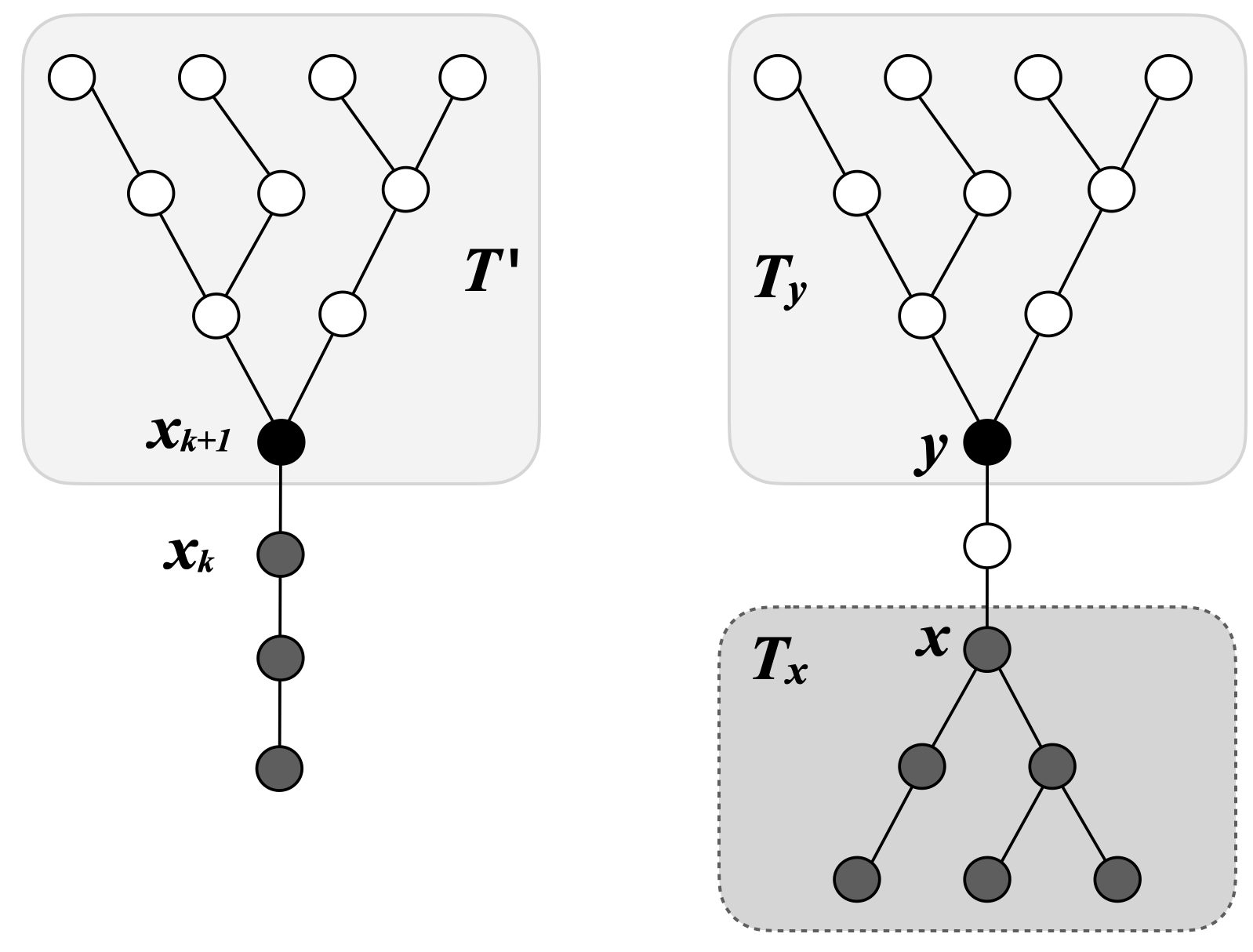} \]
\caption{Examples illustrating the reductions of Propositions~\ref{propR2} and~\ref{propR1} for $k=2$.} 
\label{fig:R1R2}

\end{figure}

 An {\it $i$-end-path} in a graph $G$ is a path of length at least $i \geq 2$ such that at least one endpoint of the path is a leaf and all internal vertices of the path have degree exactly $2$.    

\begin{proposition}\label{propR2} Let $P=(x_0,x_1,\dots,x_{k+1})$ be a $k+1$-endpath of a tree $T$, where $x_0$ is a leaf.  Let $T'=T-\{x_0,x_1,\dots,x_{k}\}$.  Then $\gamma_{all,k}^\infty(T')=\gamma_{all,k}^\infty(T)+1$.\end{proposition}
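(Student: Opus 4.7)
The plan is to prove the claimed one-guard gap by establishing matching bounds in both directions, each via an explicit strategy-transfer argument. The crucial structural observation I would exploit is that the pendant path $(x_0,x_1,\dots,x_{k+1})$ has its leaf $x_0$ at distance exactly $k$ from $x_k$ and at distance $k+1$ from $x_{k+1}$. Consequently, a single guard positioned anywhere in the block $B := \{x_0,\dots,x_k\}$ can respond to any attack within $B$, since $B$ has diameter exactly $k$, while any guard located at $x_{k+1}$ or deeper inside $T'$ is too far from $x_0$ to cover an attack on that leaf in one move.

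First I would establish one direction by constructing an eternal distance-$k$ dominating family on $T$ from a minimum family on $T'$ by adjoining one dedicated guard that patrols $B$. This guard responds to every attack on $x_0,\dots,x_{k-1}$, while the inherited strategy handles attacks at $x_{k+1}$ and at vertices deep inside $T'$. An attack at $x_k$ is handled jointly: the dedicated guard moves to $x_k$, and its position is then reset along the path so that it can respond to subsequent attacks in $B$. Legitimising the inherited strategy on $T$ would appeal to Lemma~\ref{lemma:subgraph}, using the retraction that sends each $x_i$ with $i\le k$ to $x_{k+1}$.

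For the other direction, I would take a minimum eternal distance-$k$ dominating family on $T$ and argue that exactly one guard is forever ``trapped'' in $B$. By repeatedly attacking $x_0$, every configuration in the family must place at least one guard in $B$. The key upgrade is to show that no guard swap through $x_{k+1}$ can relieve this duty: because the leaf $x_0$ is at distance $k+1$ from the unique boundary vertex $x_{k+1}$, no guard entering $B$ from outside can reach $x_0$ in a single move. Hence one guard is permanently assigned to $B$; removing it from every configuration and restricting to $T'$ yields an eternal distance-$k$ family on $T'$ using one fewer guard.

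The principal obstacle is the trapped-guard argument in the second direction. Showing that some guard lies in $B$ at every step follows from a single attack at $x_0$, but upgrading this to the assertion that a single designated guard is always the one in $B$ requires ruling out rotating assignments mediated by $x_{k+1}$. The adversarial strategy I would employ alternates attacks at $x_0$ with attacks at judiciously chosen vertices deep inside $T'$, so that any candidate relieving guard is provably too far from its required position during the transition. Making this bookkeeping rigorous, especially when several guards are active near $x_{k+1}$ simultaneously, is where I expect the genuine technical difficulty to lie.
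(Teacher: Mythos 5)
Your upper-bound direction is essentially the paper's: one dedicated guard suffices for the block $B=\{x_0,\dots,x_k\}$ because $B$ has diameter exactly $k$, and the inherited strategy on $T'$ never needs to enter $B$, giving $\gk(T)\le\gk(T')+1$. (Two minor points: Lemma~\ref{lemma:subgraph} is not the right tool for lifting a $T'$-strategy up to $T$ --- the retraction runs the other way and yields $\gk(T')\le\gk(T)$; the lift is legitimate simply because $T'$ is an induced subgraph and the new vertices are covered by the dedicated guard. Also, the statement as printed has $T$ and $T'$ transposed; like the paper's own proof, what you are actually proving is $\gk(T)=\gk(T')+1$.)

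The gap is exactly where you locate it, and it is genuine: the intermediate claim you aim for --- that a single \emph{designated} guard is permanently confined to $B$ --- is stronger than what is true and cannot be forced by attacking $x_0$ and vertices deep in $T'$. A strategy can rotate guards through $x_{k+1}$: while one guard sits in $B$ answering $x_0$, a second guard can drift to $x_{k+1}$, enter $B$ on a later move, and take over, after which the first guard departs. Repeated attacks on $x_0$ only give you that at every time step \emph{some} guard lies in $N_k[x_0]=B$, with no control over its identity. The paper avoids needing identity control entirely: it argues by contradiction, supposing $\gk(T)=\gk(T')=m$, fixes a \emph{minimal} finite attack sequence $\mathcal{A}=(a_1,\dots,a_q)$ on $T'$ that cannot be defended by fewer than $m$ guards, and plays the interleaved sequence $(x_0,a_1,x_0,a_2,\dots,x_0,a_q)$ on $T$. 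After each forced visit to $x_0$ the guard standing on $x_0$ can move at most to $x_k$ on the following step, so at the moment each $a_i$ must be occupied one guard is pinned inside $B$ and unavailable; effectively only $m-1$ guards defend $\mathcal{A}$, contradicting minimality. If you prefer to salvage your ``delete the trapped guard and restrict to $T'$'' plan, you would need to replace the designated-guard claim by a projection argument (collapse $B$ onto $x_{k+1}$ and show the projected guard is superfluous), in the spirit of the paper's proof of Proposition~\ref{propR1}; as written, your second direction does not close.
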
 

\begin{proof} Clearly $\gamma_{all,k}^\infty(T) \leq \gamma_{all,k}^\infty(T')+\gamma_{all,k}^\infty(P-x_{k+1})$ and $\gamma_{all,k}^\infty(P-x_{k+1})=1$ since it is a path of length $k$. We next show that $ \gamma_{all,k}^\infty(T')$ guards do not suffice to eternally $k$-dominate $T$.

First, we consider eternal distance-$k$ domination on the graph $T'$. Place the guards and consider a minimal finite sequence of attacks $\mathcal{A}=(a_1,a_2,\dots,a_q)$ that requires all $\gk(T')$ guards to defend this sequence of attacks. That is, any configuration containing fewer guards cannot defend the attack sequence.

Now consider the graph $T$. Suppose $\gk(T)=\gk(T')$ and place the guards on the vertices of an eternal distance-$k$ dominating set of $T$, note that there is at least one guard on $P$. Consider the sequence of attacks on tree $T$: \[(x_0,a_1,x_0,a_2,x_0,a_3,\dots,x_0,a_q)\] Thus there is always a guard in $P$. This means that guard is never able to defend a vertex in $T'$, hence, there are not enough guards to eternally $k$-dominate $T$, a contradiction.\qed\end{proof}

An {\it $i$-path} in tree $T$ is a path of length $i$ where $i \geq 2$, such that the endpoints of the path do not have degree $2$ and all internal vertices of the path have degree exactly $2$.

\begin{proposition}\label{propR1} Let $P$ be a $k$-path in $T$ with endpoints $x$ and $y$ and denote the subtrees of $T-E(P)$ that contain $x$ and $y$, by $T_x$ and $T_y$, respectively. Denote by $T_y+P$, the tree obtained from $T_y$ by adding the vertices (including $x$) and edges of path $P$. 
If 
\begin{enumerate}
\item $\epsilon_{T_x}(x) = k$,
\item $\epsilon_{T_y}(y) \geq k$, and
\item $diam(T_x)=2k$ 
\end{enumerate}
then $\gamma_{all,k}^\infty(T_y+P) = \gamma_{all,k}^\infty(T) -1$.\end{proposition}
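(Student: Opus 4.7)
The plan is to establish the two inequalities $\gk(T)\le \gk(T_y+P)+1$ and $\gk(T)\ge \gk(T_y+P)+1$ separately.

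For the upper bound, I would fix an optimal eternal distance-$k$ dominating strategy $\mathcal{S}$ on $T_y+P$ using $N:=\gk(T_y+P)$ guards and adjoin one extra guard $g^*$ placed initially at $x$. Since $\epsilon_{T_x}(x)=k$ by condition~1, a guard at $x$ distance-$k$-dominates all of $V(T_x)$, so the initial configuration is a distance-$k$ dominating set of $T$. The invariant to maintain is that after every response either $g^*$ sits at $x$, or $g^*$ sits at the most recently attacked vertex of $V(T_x)\setminus\{x\}$ while an $\mathcal{S}$-guard occupies $x$. An attack $a\in V(T_y+P)$ is handled by playing $\mathcal{S}$'s genuine response and sliding $g^*$ back to $x$ if needed; an attack $a\in V(T_x)\setminus\{x\}$ is handled by feeding a phantom attack at $x$ into $\mathcal{S}$, letting $\mathcal{S}$ bring a guard to $x$, and sending $g^*$ from $x$ to $a$. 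Two consecutive attacks of the second type are handled by a multiset-level relabelling in which the $\mathcal{S}$-guard at $x$ moves to the new attack vertex and $g^*$ slides back to $x$, preserving $\mathcal{S}$'s positions. Every guard move has length at most $k$ by condition~1, and $T_y+P$ is always dominated by $\mathcal{S}$'s current configuration while $T_x$ is always dominated by whichever guard sits at $x$.

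For the lower bound, I would follow the structure of Proposition~\ref{propR2}. Using condition~3, choose $u,u'\in V(T_x)$ with $d(u,u')=2k$; then condition~1 and the fact that $T$ is a tree force $d(u,x)=d(u',x)=k$, so $N_k[u]\cup N_k[u']\subseteq V(T_x)$ and $N_k[u]\cap N_k[u']=\{x\}$. Assume for contradiction that $\gk(T)\le \gk(T_y+P)$, let $\mathcal{T}$ be an eternal strategy on $T$ with $N:=\gk(T)$ guards, and let $\mathcal{A}=(a_1,\ldots,a_q)$ be a minimal attack sequence on $T_y+P$ that no strategy with fewer than $\gk(T_y+P)$ guards can defend. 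Play the interleaved sequence $(u,a_1,u,a_2,\ldots,u,a_q)$ against $\mathcal{T}$ on $T$. Immediately after each refresh attack at $u$, one guard of $\mathcal{T}$ must occupy $u$ while a second, necessarily distinct, guard must dominate $u'$ from within $N_k[u']$, so $\mathcal{T}$'s state contains at least two guards in $V(T_x)$. Letting $\pi:V(T)\to V(T_y+P)$ be the projection that collapses $V(T_x)$ to $x$ and fixes the rest, the image $\pi(S)$ of any distance-$k$ dominating set $S$ of $T$ is again a distance-$k$ dominating set of $T_y+P$ (because $\pi$ does not increase distances to vertices of $V(T_y+P)$), and the post-refresh image carries at least two guards at $x$, one of which is redundant. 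Deleting that redundant guard yields an $(N-1)$-guard distance-$k$ dominating set of $T_y+P$, and tracking these configurations as $\mathcal{T}$ answers the $a_i$'s produces a strategy on $T_y+P$ that defends $\mathcal{A}$ with only $N-1$ guards, contradicting the minimality of $\mathcal{A}$. Condition~2 plays a supporting role: the vertex $z\in V(T_y)$ with $d_T(z,x)\ge 2k$ ensures that no guard inside $V(T_x)$ can dominate $z$ in a single step, ruling out pathological substitutions of a $T_x$-guard for a $T_y+P$-guard near $z$ and guaranteeing that the deletion genuinely saves a guard.

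The main obstacle is verifying that the projected configurations evolve through valid moves of distance at most $k$ in $T_y+P$, as required by Definition~\ref{def:kdomfamily}: each turn of the $T_y+P$-simulation corresponds to two attacks in $T$ (a refresh at $u$ and a real attack $a_i$), so a non-$T_x$ guard could in principle drift a net distance of $2k$ in $T$ between consecutive $a_i$'s. The plan for addressing this is to argue that the refresh attack at $u$ only pulls guards deeper into $V(T_x)$ rather than redistributing the non-$T_x$ guards across $T_y+P$, so that with a careful choice of which redundant guard to delete at each step the induced moves on $T_y+P$ stay within distance $k$. Formalising this transformation and checking that the resulting family is closed under the transformation relation of Definition~\ref{def:kdomfamily} is where the bulk of the technical work will lie.
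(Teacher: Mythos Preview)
Your upper bound argument matches the paper's almost exactly; the paper also places one extra guard at $x$, feeds a phantom attack at $x$ into the $T_y+P$ strategy whenever the real attack lands in $T_x\setminus\{x\}$, and swaps roles on consecutive $T_x$-attacks.

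For the lower bound, however, the paper takes a cleaner route that avoids precisely the obstacle you flag. Rather than interleave refresh attacks at $u$ with the sequence $(a_1,\dots,a_q)$, the paper applies the retraction $f:V(T)\to V(T_y+P)$ (collapsing $T_x$ to $x$) directly to an optimal eternal family $\mathcal{E}$ on $T$. Each attack on $T_y+P$ corresponds to a \emph{single} attack on $T$, so every guard moves at most $k$ in $T$ and hence at most $k$ after retraction; there is no two-step drift. The paper then observes that every $D\in\mathcal{E}$ has at least one guard in $T_x$ and at least two in $T_x+P$ (otherwise the attack pair $\ell_1,\ell_2$ is undefendable), so every retracted configuration has at least two guards on $P$. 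Since $N_k[x]\cap V(T_y+P)=V(P)$ and any single guard on $P$ already $k$-dominates all of $P$, dropping one of the $T_x$-guards (which retracts to $x$) leaves every vertex of $T_y+P$ dominated; this yields an $(m-1)$-guard family on $T_y+P$.

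The gap in your plan is real and your proposed patch is not sound. The claim that the refresh attack at $u$ ``only pulls guards deeper into $V(T_x)$'' is unjustified: the defender's strategy $\mathcal{T}$ on $T$ is fixed but arbitrary, and in response to the refresh at $u$ it is perfectly entitled to move any or all non-$T_x$ guards by up to $k$ within $T_y+P$. Combined with the subsequent move answering $a_i$, a non-$T_x$ guard can genuinely travel $2k$ between consecutive projected states, and there is no canonical way to reassign labels so that the deleted guard absorbs this drift. The paper's one-attack-for-one-attack correspondence sidesteps this entirely, so I would recommend abandoning the interleaving and arguing directly from the retracted family as the paper does. Condition~2 is not used explicitly in the paper's lower-bound argument; its role is only to make the hypotheses of the reduction non-vacuous for later applications.
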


\begin{proof} It is easy to see $\gamma_{all,k}^\infty(T) \leq \gamma_{all,k}^\infty(T_y+P)+1$: initially place $\gamma_{all,k}^\infty(T_y+P)$ guards on an eternal distance-$k$ dominating set of subtree $T_y+P$. Place an additional guard on $x$, or on a neighbour in $T_x$ if one already exists on $x$. Whenever a vertex in $T_x - x$ is attacked,  the guard at $x$ moves to the attacked vertex.  If there was another guard in $T_x - x$, that guard now moves to $x$. The guards in $T_y+P$ move as they would if $x$ were attacked. When a vertex in $T_y +P$ is attacked, the guards in $T_x - x$ move to $x$ or a neighbour of $x$ in $T_x$. The guards remaining in $T_y +P$ defend against the attacked vertex. Thus, there exists a defending strategy for the guards so that after each attack, a guard occupies $x$ and no two guards occupy the same vertex.

We next prove that $\gk(T_y+P) < \gk(T)$.  To do this, we assume $\gk(T)=m$ and show $m-1$ guards can defend against any sequence of attacks on $T_y+P$, that is $\gk(T_y+P) \leq m-1$.

Let $T_x+P$ be the subtree obtained from $T_x$ by adding the vertices (including $y$) and edges of path $P$.  Since $\textrm{diam}(T_x)=2k$ and $\epsilon_{T_x}(x)=k$, let $\ell_1, \ell_2$ be leaves in $T_x$, distance $2k$ from each other, and each distance $k$ from $x$.   Observe that in every eternal distance-$k$ dominating set of $T$, there must always be at least two guards in $T_x+P$ and at least one guard in $T_x$.  Otherwise, if $\ell_1$ is attacked and a guard moves to $\ell_1$, no guard can move to distance $k$ of $\ell_2$.  

Suppose a vertex of subtree $T_y+P$ is attacked.  The $m$ guards can move to form a distance-$k$ dominating set containing the attacked vertex such that there is a guard at vertex $x$ by following a retraction strategy from $T$ as described in Lemma~\ref{lemma:subgraph}. Let $f:V(T) \rightarrow V(T_y+P)$ be a retraction mapping all vertices of $T_x$ to $x$. Whenever a guard in $T$ moves to a vertex $v_x$  in $T_x -x$, the corresponding guard in $T_y +P$ moves to $f(v_x) = x$. From above, we must always have at least one guard in $T_x$. Thus, in the retraction strategy in $T_y+P$, there must always be a guard on $x$. Observe however, that if there is an attack in $T$ on $\ell_1$ then from above, another guard would need to move to $x$ in order to distance-$k$ dominate $\ell_2$. Using the above retraction strategy, this maps two guards to $x$ in $T_y+P$.    

Recall that in $T$, there were always at least two guards in subtree $T_y+P$.  Thus, in the retraction, there are always at least two guards in $T_y+P$.  One guard, however, is superfluous as the vertices of path $P$ can be defended by one guard since $P$ has length $k$. So using the retraction strategy, we form an eternal distance-$k$ dominating set for $T_y+P$ that uses $m-1$ guards.  \qed\end{proof}

The two previous results provided a means to ``trim branches" off a tree to reduce the eternal distance-$k$ domination number in a controlled way. Our next result provides a way to ``trim branches" without changing the eternal distance-$k$ domination number.  For $k=2$, for example, if a vertex is adjacent to at least two leaves, then a leaf can be deleted without changing the eternal distance-$2$ domination number.  We say that the \emph{inclusive-descendants} of a vertex, $v$ in a rooted tree, are all of the descendants of $v$ along with $v$ itself. Figure~\ref{fig:k2rootedtree} shows an example of the tree $T$ and vertices $v_1, ..., v_d$ as defined in Theorem~\ref{thm:sup}.

\begin{figure}[htbp]
    \centering
    \includegraphics[width=0.5\textwidth]{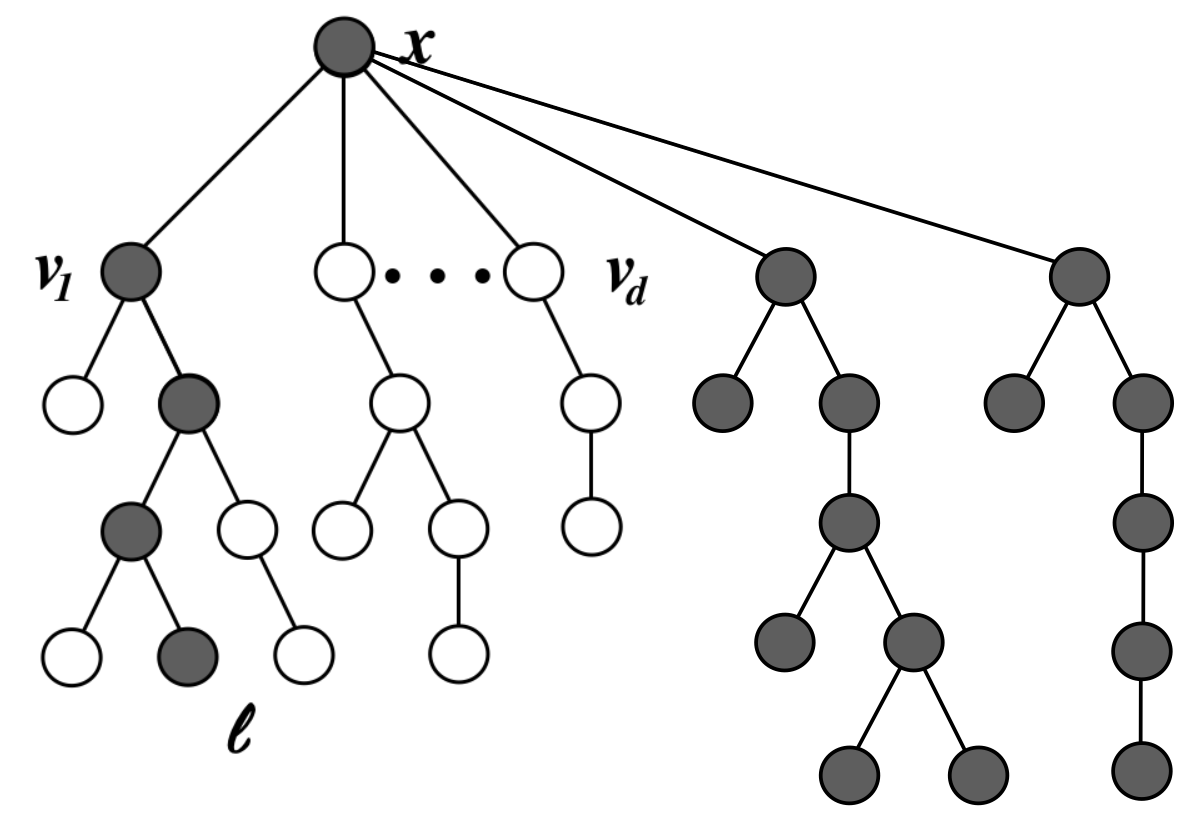}
    \caption{The vertices $x, v_1,$ and $\ell$ from the statement of Theorem~\ref{thm:sup}, where $T_1$ is highlighted in grey and $k=8$.}
   \label{fig:k2rootedtree}
\end{figure}

\begin{theorem}\label{thm:sup} Let $T$ be a tree rooted at $x$.  Suppose $x$ has children $v_1,\dots,v_d$ with the property that all inclusive-descendants of $v_1,\dots,v_d$ are within distance $\lfloor \frac{k}{2}\rfloor$ of $x$ and there is a leaf $\ell$ that is an inclusive-descendant of $v_1$ that is exactly distance $\lfloor \frac{k}{2}\rfloor$ from $x$. Note that $x$ may have other children as well. 

Let $T_1$ be the tree obtained from $T$ by deleting $v_2,\dots,v_d$, all descendants of $v_2,\dots,v_d$, and all descendants of $v_1$ apart from those on the $v_1\ell$-path (or $v_1$ itself if $v_1=\ell$).  Then $\gk(T_1)=\gk(T)$.
\end{theorem}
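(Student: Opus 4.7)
The plan is to prove the equality by showing both inequalities $\gk(T_1)\le \gk(T)$ and $\gk(T)\le \gk(T_1)$ separately.

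For the easy direction $\gk(T_1)\le \gk(T)$, I would exhibit $T_1$ as a retract of $T$ and invoke Lemma~\ref{lemma:subgraph}. Define $f\colon V(T)\to V(T_1)$ to fix $V(T_1)$ pointwise and send each deleted vertex $u$ to the unique vertex of the $x\ell$-path at distance $d_T(x,u)$ from $x$; this is well-defined because each deleted vertex lies within $\lfloor k/2\rfloor$ of $x$ and the $x\ell$-path has length exactly $\lfloor k/2\rfloor$. A short check of adjacent-vertex pairs (both in $V(T_1)$; one in $V(T_1)$ and one deleted with the former being the parent; both deleted) confirms that $f$ is a graph homomorphism and hence a retraction.

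For $\gk(T)\le \gk(T_1)$, I would lift any eternal distance-$k$ dominating family $\mathcal E_1$ of $T_1$ on $q=\gk(T_1)$ guards to a strategy on $T$. The crux is the observation that every state $D_1\in\mathcal E_1$ contains a guard within distance $\lceil k/2\rceil$ of $x$: the leaf $\ell\in V(T_1)$ lies at distance $\lfloor k/2\rfloor$ from $x$ and must be dominated, and in $T_1$ any vertex dominating $\ell$ is either on the $x\ell$-path (within $\lfloor k/2\rfloor$ of $x$) or reaches $\ell$ only through $x$ (forcing distance at most $k-\lfloor k/2\rfloor=\lceil k/2\rceil$ from $x$). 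Such a guard automatically lies within $\lceil k/2\rceil+\lfloor k/2\rfloor=k$ of every deleted vertex, so any state in $\mathcal E_1$, viewed as a multiset in $V(T)$, already distance-$k$ dominates $T$.

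The shadow strategy on $T$ then maintains the invariant that the multiset of guard positions projects under $f$ to some state in $\mathcal E_1$. On an attack at $v\in V(T)$, simulate an attack at $f(v)$ in the $T_1$ strategy to obtain a new state $D_1'$, pick a guard $g^*$ whose new $T_1$-target is $f(v)$, move $g^*$ in $T$ to $v$ itself, and move every other guard to its $T_1$-target. I would then verify: (i) each $T$-move has length at most $k$; (ii) $v$ is occupied by construction; and (iii) the new configuration distance-$k$ dominates $T$, where vertices of $V(T_1)$ inherit domination from $D_1'$ (noting that $g^*$ at $v$ covers at least what $f(v)$ would, via the path through $x$), and deleted vertices are covered by the guaranteed near-$x$ guard, which is either a guard sitting at its $T_1$-target or $g^*$ itself at the deleted vertex $v$ (which is within $\lfloor k/2\rfloor$ of $x$).

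The main obstacle is the case analysis in (i) and (iii), particularly when a guard currently occupies a deleted vertex as the residue of a previous defense on a deleted vertex. This is handled uniformly by two cases: whenever exactly one endpoint of a guard move is deleted and the other is not a descendant of $v_1$, the path in $T$ passes through $x$ and $d_T$ equals $d_{T_1}$ applied to the $f$-images, which is bounded by $k$; whenever both endpoints lie in the radius-$\lfloor k/2\rfloor$ region around $x$ (i.e., deleted vertices together with the $x\ell$-path), the bound $2\lfloor k/2\rfloor\le k$ applies. These two cases cover every move produced by the shadow strategy.
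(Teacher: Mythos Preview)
Your proof is correct and follows the same strategy as the paper: invoke Lemma~\ref{lemma:subgraph} for $\gk(T_1)\le\gk(T)$, then lift an optimal $T_1$-strategy to $T$ by shadowing. The only difference is that the paper redirects every attack on a deleted vertex to the single leaf $\ell$, whereas you redirect via the full retraction $f$ to the point of the $x\ell$-path at matching depth; both choices work because any two vertices in the $\lfloor k/2\rfloor$-ball about $x$ lie within distance $k$ of one another, and your version simply spells out the move-length and domination case analysis that the paper leaves implicit.
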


\begin{proof} By Lemma~\ref{lemma:subgraph}, $\gk(T_1) \leq \gamma_{all,k}^\infty(T)$. It is easy to see that   $\gk(T_1)$ guards suffice to eternally distance-$k$ dominate $T$.  

The guards on $T$ move as they would on graph $T_1$, with one exception: suppose a vertex $v$ of $V(T)\backslash V(T_1)$ is attacked. The guards move as if leaf $\ell$ was attacked, only instead of moving a guard $g$ to $\ell$, guard $g$ instead moves to the attacked vertex $v$.  Since all inclusive-descendants of $v_1,v_2,\dots,v_d$ are within distance $\lfloor \frac{k}{2} \rfloor$ of $x$ (note this includes both $\ell$ and $v$), any guard $g$ that can move to $\ell$, could instead move to $v$.  Finally, any vertex distance-$k$ dominated by $\ell$ is also distance-$k$ dominated by $v$.  \qed\end{proof} 


While the statement of Theorem~\ref{thm:sup} is given for a single tree $T$, if we consider a subgraph $T'$ of a tree $T$, then we may ``trim'' the branches of $T'$ without changing the eternal distance-$k$ domination number of the subtree. Lemma~\ref{lemA} describes another set of vertices that can be deleted from a tree without changing the eternal distance-$k$ domination number.  We first identify two leaves $\ell_1,\ell_2$ that are distance $2k$ apart and let $x$ be the centre of the $\ell_1\ell_2$-path.  Informally, we identify all ``branches'' from $x$ whose leaves are all within distance $k$ of $x$ and remove all vertices on these branches, apart from those on the $\ell_1\ell_2$-path.  Lemma~\ref{lemA} proves the resulting subtree has the same eternal distance-$k$ domination number as the original tree.  An example is shown in Figure~\ref{fig:scribbles}, where the vertices removed are uncolored.

\begin{figure}[ht]
\[\includegraphics[width=0.35\textwidth]{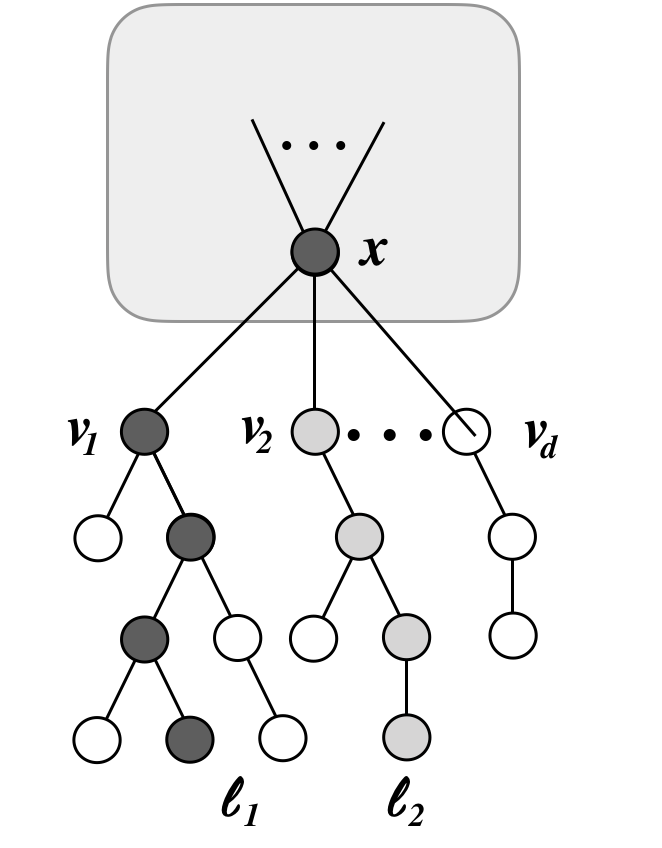}\]
\caption{An example pertaining to Lemma~\ref{lemA} where the $v_1\ell_1$-path is grey, and the $v_2\ell_2$-path is light grey.}

\label{fig:scribbles}

\end{figure}

\begin{lemma}\label{lemA} Let $T$ be a tree rooted at $x$.  Suppose $x$ has children $v_1,\dots,v_d$ with the following properties:
\begin{enumerate}
\item all descendants of $v_1,\dots,v_d$ are within distance $k$ of $x$;
\item there is a leaf descendant $\ell_1$ of $v_1$ that is exactly distance $k$ from $x$, and 
\item there is a leaf descendent $\ell_2$ of $v_2$ that is exactly distance $k$ from $x$.
\end{enumerate}
Note that $x$ may have other children as well. Let $T_1$ be the tree obtained from $T$ by deleting $v_3,\dots,v_d$, all descendants of $v_3,\dots,v_d$, and all descendants of $v_1$ and $v_2$ apart from those on the $v_1\ell_1$-path and $v_2\ell_2$-path.  Then $\gk(T_1)=\gk(T)$.
\end{lemma}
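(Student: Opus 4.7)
The plan is to prove $\gk(T_1) = \gk(T)$ by establishing the two inequalities separately. For the easy direction $\gk(T_1) \le \gk(T)$, I would exhibit $T_1$ as a retract of $T$ and invoke Lemma~\ref{lemma:subgraph}. The retraction $f \colon V(T) \to V(T_1)$ is defined by $f(u)=u$ for $u \in V(T_1)$, and for each deleted vertex $u$ at distance $i$ from $x$ we set $f(u)$ to be the vertex on the $v_1\ell_1$-path at distance $i$ from $x$. Adjacent vertices in a deleted branch differ in distance from $x$ by exactly one, so their images are adjacent along the preserved path, making $f$ a graph homomorphism that fixes $V(T_1)$.

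For the hard direction $\gk(T) \le \gk(T_1)$, I would construct an eternal distance-$k$ dominating family $\mathcal{E}$ for $T$ of size $\gk(T_1)$ by modifying an optimal family $\mathcal{E}_1$ for $T_1$. The key structural observation is that since $d_{T_1}(\ell_1,\ell_2)=2k$, the only vertex of $T_1$ within distance $k$ of \emph{both} $\ell_1$ and $\ell_2$ is $x$ itself. Consequently, every configuration of $\mathcal{E}_1$ either already contains a guard at $x$ or contains two distinct central guards, one on the $v_1\ell_1$-path and one on the $v_2\ell_2$-path. A short tree calculation then shows that for any position $j$ on either preserved path, $N_k[j] \subseteq N_k[x]$ in $T_1$, so relocating such a central guard to $x$ never reduces coverage within $T_1$.

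Accordingly, for each $D \in \mathcal{E}_1$ define $D^*$ to be $D$ if $D$ already has a guard at $x$, and otherwise $D$ with one central guard's position replaced by $x$. Each $D^*$ distance-$k$ dominates all of $T$, since the $x$-guard covers every deleted vertex (each within distance $k$ of $x$). Take $\mathcal{E}$ to consist of the sets $\{D^* : D \in \mathcal{E}_1\}$ together with ``transient'' configurations arising when a deleted vertex $v$ is attacked: the $x$-guard moves to $v$ and a central guard slides to $x$ in a single step. Closure of $\mathcal{E}$ under defense is verified as follows: attacks in $V(T_1)$ are handled by mimicking $\mathcal{E}_1$'s defensive bijection, with a possible swap of destinations to preserve the $x$-guard invariant; attacks in deleted branches are handled by the described two-guard exchange; and attacks from a transient configuration are handled by first moving the $v$-guard back to $x$ before applying $\mathcal{E}_1$'s response.

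The main obstacle will be verifying that each modified bijection is a valid transformation in $T$, that is, every guard's move has length at most $k$. The critical case is when $\mathcal{E}_1$'s response would send the shifted central guard from $j$ to some $j''$: the rerouted move from $x$ to $j''$ must still be of length at most $k$, which follows from $j'' \in N_k[j] \subseteq N_k[x]$. A secondary technical point is choreographing the transient-state transitions so that returning the $v$-guard to $x$ and executing $\mathcal{E}_1$'s response can be combined into a single defense step without exceeding the distance bound.
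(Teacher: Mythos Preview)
Your overall strategy matches the paper's: show $T_1$ is a retract of $T$ for one inequality, and for the other force a guard to sit at $x$ in every configuration (using that $N_k[j]\subseteq N_k[x]$ for $j$ on either preserved path), then use that $x$-guard to cover all deleted vertices. However, two points deserve attention.

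First, your explicit retraction is not a homomorphism. You send every deleted vertex at distance $i$ from $x$ to the vertex at distance $i$ on the $v_1\ell_1$-path, but a deleted descendant of $v_2$ may be adjacent to a kept vertex on the $v_2\ell_2$-path; those two images land on different branches and are at distance at least $3$ in $T_1$. The fix is immediate---map deleted descendants of $v_2$ onto the $v_2\ell_2$-path (and, say, deleted descendants of $v_j$ for $j\ge 3$ consistently onto one fixed path)---but as written the map fails the edge-preserving condition needed for Lemma~\ref{lemma:subgraph}.

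Second, your ``transient'' bookkeeping is exactly where the paper's packaging buys something. The paper first produces an optimal family $\mathcal{E}_x$ for $T_1$ in which \emph{every} configuration already contains $x$ (this is your $D\mapsto D^*$ step, done once and for all on $T_1$), and then, when a deleted vertex $z$ is attacked, it \emph{simulates an attack at $\ell_1$} in $\mathcal{E}_x$ and diverts only the guard headed to $\ell_1$ over to $z$. The point is that the transient state then corresponds to a genuine member of $\mathcal{E}_x$ with one guard relabelled, so the next $T_1$-attack is handled automatically by $\mathcal{E}_x$'s own response; the off-path guard simply returns to $x$, which is legal since any destination the $\ell_1$-guard could reach in $T_1$ lies in $N_k[\ell_1]\subseteq N_k[x]$. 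Your inline construction has to re-derive this each time, and as you note, combining ``$v\to x$'' with ``apply $\mathcal{E}_1$'s response from $D$'' in a single step is not obviously length-$\le k$ for the $v$-guard. Routing through the $\ell_1$-simulation sidesteps that difficulty.
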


\begin{proof} By Lemma~\ref{lemma:subgraph}, $\gamma_{all,k}^{\infty}(T_1)\leq \gamma_{all,k}^{\infty}(T)$.  Suppose $\gamma_{all,k}^{\infty}(T_1)=m$.  We simply modify the movements of the $m$ guards on $T_1$ to defend against attacks on $T$.  

We first point out the existence of a particular distance-$k$ dominating family on $T_1$. Each eternal distance-$k$ dominating set on $T_1$ must contain at least one vertex on the $x\ell_1$-path (otherwise $\ell_1$ is not distance-$k$ dominated).  Suppose that in response to an attack at a vertex on the  $x\ell_1$-path, the guards move to form a distance-$k$ dominating set that does not contain $x$.  Clearly, the guards could alternately have moved to form a distance-$k$ dominating set that does contain $x$: after the guards move in response to the attack, there must be a guard on the $x\ell_2$-path (otherwise $\ell_2$ is not distance-$k$ dominated) and this guard could have moved to $x$. Thus, there exists an eternal distance-$k$ dominating family on $T_1$ where each eternal distance-$k$ dominating set is of cardinality $m$ and contains $x$.  We now exploit this eternal distance-$k$ dominating family $\mathcal{E}_x$ in order to create an eternal distance-$k$ dominating family for $T$ of cardinality $m$. 

Initially, place $m$ guards on the vertices of $T$ that correspond to the vertices of some eternal distance-$k$ dominating set $S \subseteq \mathcal{E}_x$ on $T_1$.  Since this results in a guard on vertex $x$ on $T$, we know $S$ is a distance-$k$ dominating set on $T$.  If a vertex in subtree $T_1$ is attacked in $T$, the guards in $T$ mirror the movements of how guards $T_1$ would move in response to the attack with one exception:  if a guard in graph $T_1$ moves from vertex $b$ in $T_1$ to $c$ in $T_1$, then the corresponding guard in graph $T$ will move from vertex $b$ to $c$ except in the case that the corresponding guard was in $V(T)\backslash V(T_1)$.  In that case, the guard in $V(T)\backslash V(T_1)$ moves to $x$. Note that in $T$, this results in the vertices of $T$ remaining distance-$k$ dominated. 

Now suppose that on $T$, vertex $z \in V(T)\backslash V(T_1)$ is attacked. If the previous vertex attacked was also in $V(T)\backslash V(T_1)$, then a guard moves from $V(T)\backslash V(T_1)$ to $x$ and a guard moves from $x$ to $z$.  Otherwise, the guards move as if the attack was at $\ell_1$ in $T_1$, except instead of a guard moving from $x$ to $\ell_1$, the guard moves from $x$ to $z$.  This results in the guards occupying a distance-$k$ dominating set in $T$.  \qed\end{proof}

The above lemmas provide some useful tools for reducing the problem of finding the eternal distance-$k$ domination number in trees, however, there are many examples in between where we cannot characterize the change in the eternal distance-$k$ domination number. Given the challenges in relating the eternal distance-$k$ domination number of a tree to a particular subtree, we later focus on the case where $k=2$ in Section~\ref{section_treedecomposition}.

\section{Eternal distance-$k$ domination on perfect $m$-ary trees}\label{sec:mary}

In Proposition~\ref{propd1} and Theorem~\ref{thmd2} we consider the eternal distance-$k$ domination number for perfect $m$-ary trees of small depth.  We then use the results for perfect $m$-ary trees of small depth as the base cases for an inductive proof determining the eternal distance-$k$ domination number of perfect $m$-ary trees of larger depth.

\begin{proposition}\label{propd1} For any $m \geq 2$ and $k\geq 2$, let $T$ be a perfect $m$-ary tree of depth $d\leq k$.  Then $$\gk(T) = \begin{cases} 1 & \text{ if } d \leq \frac{k}{2}\\ 2 & \text{ if } d > \frac{k}{2}. \end{cases}$$  \end{proposition}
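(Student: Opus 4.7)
The plan is to prove this by a direct appeal to results already established in the paper, splitting into the two cases based on how $d$ compares with $k/2$.

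For the case $d \leq k/2$, I would observe that the diameter of a perfect $m$-ary tree of depth $d$ is exactly $2d$, so in this regime the diameter is at most $k$. By the characterization given immediately after Theorem~\ref{thrm:paths} (namely, $\gk(G)=1$ if and only if the diameter of $G$ is at most $k$), it follows that a single guard suffices and is necessary whenever the tree has more than one vertex; the guard simply moves to whichever vertex is attacked, and since every vertex lies within distance $k$ of every other vertex, the resulting singleton is still a distance-$k$ dominating set.

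For the case $\tfrac{k}{2} < d \leq k$, the lower bound $\gk(T)\geq 2$ follows from the same diameter characterization: since $2d > k$, there exist two leaves at distance $2d > k$ from one another, so no single vertex can be within distance $k$ of both, and hence a single guard cannot even distance-$k$ dominate $T$, let alone eternally do so. For the upper bound $\gk(T) \leq 2$, I would note that because $d \leq k$, the tree $T$ is (with its natural root) a depth-$k$-rooted tree in the sense of the definition preceding Proposition~\ref{propkroot}. Proposition~\ref{propkroot} then immediately gives $\gk(T)\leq 2$, completing the argument.

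The proof is essentially a case split with each case being a one-line invocation of a prior result, so there is no real obstacle; the only thing to verify carefully is the arithmetic identifying the diameter of a perfect $m$-ary tree of depth $d$ as $2d$ (achieved by any two leaves with distinct parents, which exist since $m \geq 2$), and the trivial observation that such a tree has eccentricity exactly $d$ from the root, making it a depth-$k$-rooted tree whenever $d \leq k$.
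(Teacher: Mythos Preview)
Your proof is correct and follows essentially the same route as the paper: a diameter argument for both the first case and the lower bound in the second, and the root-plus-floater strategy for the upper bound. The only cosmetic difference is that you invoke Proposition~\ref{propkroot} for the upper bound $\gk(T)\le 2$, whereas the paper re-derives that two-guard strategy inline; citing the earlier proposition is arguably cleaner.
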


\begin{proof} If $d \leq \frac{k}{2}$ then the diameter of $T$ is at most $k$.  Thus, a single guard can successfully defend against any sequence of vertex attacks.  

If $\frac{k}{2} < d \leq k$ then the diameter of $T$ is at least $k+1$.  Thus, the distance-$k$ domination number of $T$ is at least $2$ and therefore $\gk(T) \geq 2$.  

To show the upper bound, we show that two guards can successfully eternally distance-$k$ dominate the tree $T$. First, place one guard on the root $r$ and place another guard at any other arbitrary vertex $u$. Clearly, for all $v \in V(T)$ $d_T(r,v)\leq k$. After any attack, the guard on $r$ moves to the attacked vertex and the guard on $u$ moves to $r$. This is an equivalent configuration to our initial placement, thus we can continue this movement indefinitely.\qed\end{proof}

\begin{theorem}\label{thmd2} For any $m \geq 2$ and $k \geq 2$, let $T$ be a perfect $m$-ary tree of depth $d$ where $k \leq d \leq \frac{3k}{2}$.  Then $$\gk(T) = 1+m^{d-k}. $$\end{theorem}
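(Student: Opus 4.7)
My plan is to prove the two bounds separately after fixing notation. Set $L = d - k$, so that $0 \leq L \leq k/2$, and let $v_1, \dots, v_{m^L}$ be the $m^L$ vertices at depth $L$ in $T$. For each $i$ let $T_i$ denote the subtree rooted at $v_i$; since $T$ has depth $L+k$, each $T_i$ is a perfect $m$-ary tree of depth $k$. The geometric fact driving everything is that any path in $T$ from a vertex outside $T_i$ to a leaf of $T_i$ must pass through $v_i$ and then travel $k$ more edges, so every leaf of $T_i$ lies within distance $k$ only of vertices in $T_i$.

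For the lower bound $\gk(T) \geq m^L + 1$ I would argue by contradiction. Suppose an eternal distance-$k$ dominating family exists with only $m^L$ guards and take any member $S_0$. Since each $T_i$ must contain a guard, and $|S_0|=m^L$, the set $S_0$ has exactly one guard per $T_i$. A single guard in $T_i$ distance-$k$ dominates all of its leaves only if it sits at $v_i$: any other vertex at depth $h \geq 1$ below $v_i$ in $T_i$ has a leaf of $T_i$ at distance $h+k > k$ (go up to $v_i$, then down a different branch). Hence $S_0 = \{v_1, \dots, v_{m^L}\}$. The adversary now attacks a leaf $\ell$ of $T_1$; the defending set $S_1$ must contain $\ell$, remain distance-$k$ dominating, and have $m^L$ guards. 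Each $T_i$ with $i \neq 1$ still needs a guard, and since $\ell$ alone does not distance-$k$ dominate $T_1$, the subtree $T_1$ needs at least two guards in $S_1$, forcing $|S_1| \geq (m^L-1) + 2 = m^L + 1$, a contradiction.

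For the upper bound $\gk(T) \leq m^L + 1$ I would exhibit an explicit strategy. Place one guard at each $v_i$ together with a ``floating'' guard $f$ initially at the root, and maintain the invariant that after every attack there is a guard at each $v_i$ together with one extra guard somewhere in $T$. Three distance inequalities make this feasible, each holding because $L \leq k/2$: $d(v_i, v_j) \leq 2L \leq k$, $d(\mathrm{root}, v_i) = L \leq k$, and $d(v_i, w) \leq k$ for every $w \in T_i$. A case analysis on the location of the attacked vertex $u$ (inside some $T_i$, on the trunk at level $<L$, or at the root) and the current location of $f$ (at the root, on the trunk, or inside some $T_j$) shows that at most three guard moves suffice to defend: the pivot governing the subtree containing $u$ steps to $u$; if needed, an adjacent pivot $v_i$ slides over to refill it; and $f$ closes the chain by moving to the remaining empty pivot, with the new floating guard positioned where the chain terminates.

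The main obstacle will be organizing the upper-bound case analysis cleanly without redundancy; the idea is straightforward but the several possible positions for $f$ produce mildly different sequences of moves that each must be verified. The key inequality $2L \leq k$, which is exactly the hypothesis $d \leq \tfrac{3k}{2}$, is what allows one subtree to lend assistance to another through a single hop between adjacent pivots, and without it the entire ``sharing'' scheme for the single floating guard would collapse.
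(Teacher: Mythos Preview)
Your proof is correct and follows essentially the same approach as the paper: both identify the pivots $v_i$ at depth $d-k$, argue that each subtree $T_i$ must contain a guard, and for the upper bound maintain the invariant of one guard at every pivot together with a single floating extra, using the key inequality $2(d-k)\le k$ to shuttle the floater between pivots. The only cosmetic difference is in the lower-bound finishing move---you pin down $S_0=\{v_1,\dots,v_{m^L}\}$ exactly and then attack a leaf, whereas the paper attacks the root (handling $d=k$ separately via the earlier proposition); your version has the minor advantage of treating $d=k$ uniformly.
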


\begin{proof} When $d=k$, Proposition 2 gives $\gk(T) = 2 =  1+m^{k-k}$ so the statement holds.  Assume $k < d \leq \frac{3k}{2}$ and let $A = \{v_1,v_2,\dots,v_{m^{d-k}}\}$ be the set of vertices that are each distance exactly $k$ from a leaf. Notice that all vertices of $A$ are at depth $d-k \leq \frac{k}{2}$, and thus $d_T(v_i, v_j) \leq k$ for all $v_i, v_j \in A$. This set contains all vertices precisely height $k$ above the leaves, and is well-defined since the depth of this $m$-ary tree is greater than $d$. Consider then, the subtree rooted at $v_i$ for $i \in \{1,\dots,m^{d-k}\}$, and notice that all leaves in this subtree are exactly distance $k$ from $v_i$. We refer to the subtree rooted at $v_i$ as $A_i$ and observe that in every eternal distance-$k$ dominating set, there must be at least one guard in $A_i$; otherwise if a leaf in $A_i$ is attacked, no guard can move to the attacked vertex.  Thus, $\gk(T) \geq m^{d-k}$.  Further, observe that if $\gk(T)= m^{d-k}$ and the root is attacked, no guard can move to the root as a guard in $A_i$ must remain in $A_i$ in the event the subsequent attack occurs at a leaf in $A_i$. Thus, $\gk(T) > m^{d-k}$. 

\begin{figure}[htbp]
    \centering
    \includegraphics[width=0.34\textwidth]{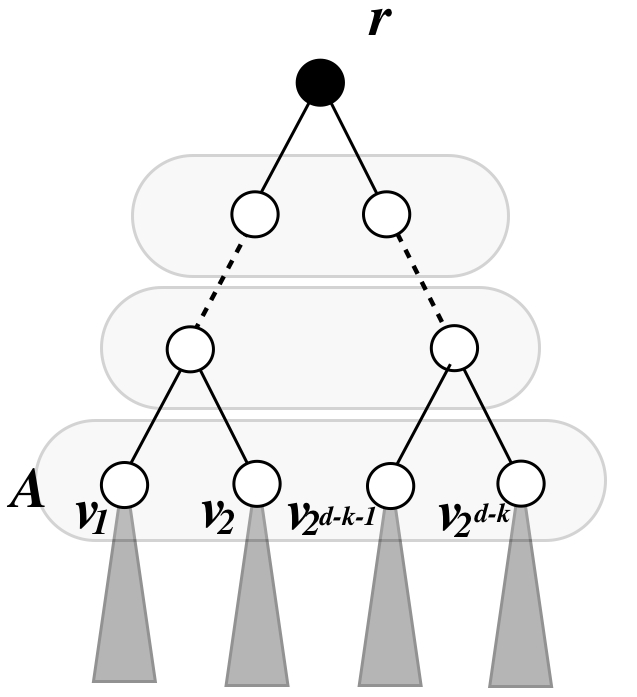}
    \caption{An example of a tree $T$ rooted at $r$ with $m=2$, and the vertices $A$ at depth $d-k$. The subtrees of each $v_i \in A$ are shown as grey shaded triangles.}
    \label{Fig:example}
\end{figure}

To show that $\gk(T) \leq 1+m^{d-k}$, initially place a guard on each vertex in $A$ and place one additional guard at an arbitrary leaf; without loss of generality, suppose this leaf is in $A_1$, the subtree rooted at $v_1$.  The resulting set of guards is distance-$k$ dominating, and we next prove that it is an eternal distance-$k$ dominating set.

If a vertex in $A_1$ is attacked, one guard at $v_1$ moves to the attacked vertex and the other guard  moves to $v_1$ as in the proof of Proposition~\ref{propd1}. If a vertex $A_i$ is attacked for $i >1$, the guard at $v_i$ moves to the attacked vertex, and the guard on $v_1$ can move to $v_i$ since $d_T(v_1, v_i) \leq k$. The additional guard in $A_1$ then moves to $v_1$. The resulting distance-$k$ dominating set is equivalent to our initial set of all guards on $A$ with an additional guard in an arbitrary subtree.

We next show that we can defend against attacks at depth less than $d-k$. Let $T_{d-k-1}$ be the subtree of $T$ rooted at $r$ ,of depth $d-k-1 < \frac{k}{2}$, and note that this subtree may be the single vertex $r$ if $d-k-1=0$.

Suppose the next attack occurs at a vertex $u$ in the tree $T_{d-k-1}$ and the guard who is not on a vertex in $A$ is in the subtree rooted at $v_i$. The guard at $v_i$ moves to the attacked vertex $u$ since $\textrm{diam}(T_{d-k-1}) < k$,  and the guard in subtree $v_i$ moves to $v_i$.  

If the subsequent attack is at a vertex $w \in V(T_{d-k-1})$, the guard at $u$ moves to $w$ and no other guard moves. The resulting distance-$k$ dominating set is equivalent to the previous set of all guards on vertices in $A$ with an additional guard in $T_{d-k-1}$.

Otherwise, if the subsequent attack is at a vertex in $A_t$, for some $t \in \{1,2,\dots,m^{d-k}\}$, the guard at $v_t$ moves to the attacked vertex while the guard at $u$ moves to $v_t$. The resulting distance-$k$ dominating set is equivalent to our initial set of all guards on $A$ with an additional guard in an arbitrary subtree.

We summarize the movement of guards in the following table, where the attack occurs at some vertex $a$.  In the table, $g^\star$ refers to the guard not currently on a vertex of $A$.  Observe that this implies guard $g^\star$ either occupies a vertex of $T_{d-k-1}$ or a vertex in $A_g \backslash \{v_g\}$; additionally vertex $v_a \in A_a$. \qed\end{proof}

\begin{table}[htbp]
\renewcommand{\arraystretch}{1.5}
    \centering
    \begin{tabular}{c|c|c}
         ~ & $a \in A_a$ & $a \in V(T_{d-k-1})$ \\
         \hline
         ~ & $g^\star \rightarrow v_g$ & $g^\star \rightarrow v_g$  \\ 
        $g^\star$ on a vertex in $A_g\backslash \{v_g\}$ & $v_g \rightarrow v_a $  & $v_g \rightarrow a$\\
        
        ~ & $v_a \rightarrow a $ & ~ \\
        \hline
        $g^\star$ on a vertex of $V(T_{d-k-1}) $ & $g^\star \rightarrow v_a$ & $g^\star \rightarrow a$ \\
        ~  & $v_a \rightarrow a$ & ~
    \end{tabular}
    \caption{Summary of the movement of the guards between the two general distance-$k$ dominating sets as described in Theorem~\ref{thmd2}.}
    \label{tab:my_label}
\end{table}

We next determine the eternal distance-$k$ domination number for any perfect $m$-ary tree of depth $d > \frac{3k}{2}$.  The proof applies Proposition 4 repeatedly until what remains is a perfect $m$-ary tree of small depth, allowing for a recursive result.  The results of Proposition~\ref{propd1} and Theorem~\ref{thmd2} will form the base cases for the inductive proof of Theorem~\ref{thmd3}.

\begin{theorem}\label{thmd3} For any $m \geq 2$ and $k \geq 2$, let $T$ be a perfect $m$-ary tree rooted at $r$ of depth $d > \frac{3k}{2}$.  Let $q$ be the unique integer such that $d \equiv q ~(mod ~k$) with $\frac{k}{2} \leq q < \frac{3k}{2}$. Define $T_q$ to be a perfect $m$-ary sub-tree of $T$, rooted at $r$ with depth $q$ and $m \geq 2$ Then $$\gk(T) = \frac{m^d-m^q}{m^k-1}+\gk(T_q).$$ \end{theorem}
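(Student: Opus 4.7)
The plan is to prove the closed form by strong induction on $d$ via the recursive identity
\[
\gk(T_d) = m^{d-k} + \gk(T_{d-k}),
\]
valid for every $d > 3k/2$. The base cases $d \leq 3k/2$ (i.e.\ $d = q$) are furnished by Proposition~\ref{propd1} and Theorem~\ref{thmd2}. Unrolling the recursion exactly $(d-q)/k$ times and summing the resulting geometric progression $m^q + m^{q+k} + \cdots + m^{d-k}$ yields
\[
\gk(T_d) = m^{d-k} + m^{d-2k} + \cdots + m^q + \gk(T_q) = \frac{m^d - m^q}{m^k-1} + \gk(T_q),
\]
which is exactly the closed form in the statement.

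For the upper bound of the recursive step, I would describe a strategy on $T_d$ using $m^{d-k} + \gk(T_{d-k})$ guards. Label the depth-$(d-k)$ vertices $v_1,\ldots,v_{m^{d-k}}$ and let $A_i$ denote the perfect $m$-ary subtree of depth $k$ rooted at $v_i$. Place one \emph{anchor} guard at each $v_i$ and $\gk(T_{d-k})$ \emph{mobile} guards jointly forming an optimal eternal distance-$k$ dominating configuration on the top subtree $T_{d-k}$. The guards are choreographed so that every $v_i$ remains occupied and the mobile pool continues to realize a valid $T_{d-k}$-configuration. When the attack lies inside $T_{d-k}$, only the mobile pool responds, using its $T_{d-k}$-strategy. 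When the attack is at some $a \in A_i \setminus \{v_i\}$, the anchor at $v_i$ moves to $a$ while the mobile pool simultaneously responds to the ``ghost'' attack at $v_i$, depositing a mobile guard at $v_i$; a role swap then reassigns that guard as the new anchor of $A_i$, while the displaced anchor at $a$ becomes a mobile. Since every vertex of $A_i$ lies within distance $k$ of $v_i$, the new mobile can return to $v_i$ on a subsequent move, restoring the required mobile configuration on $T_{d-k}$.

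For the lower bound, the key combinatorial observation is that the only vertex of $T_{d-k}$ lying within distance $k$ of any leaf of $A_i$ is $v_i$ itself (since a depth-$d$ leaf only admits ancestors of depth $\geq d-k$ as neighbours at distance $\leq k$, and only $v_i$ satisfies this among vertices of $T_{d-k}$). Consequently, every eternal distance-$k$ dominating configuration on $T_d$ must place at least one guard in each subtree $A_i$, contributing $m^{d-k}$. To recover the additive $\gk(T_{d-k})$, one applies the retraction $f: T_d \to T_{d-k}$ that collapses each $A_i$ onto $v_i$; by Lemma~\ref{lemma:subgraph} the projected multiset of guards yields a valid eternal distance-$k$ dominating strategy on $T_{d-k}$, immediately giving $\gk(T_d) \geq \gk(T_{d-k})$.

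The main obstacle, and the hardest part of the argument, is to combine these two bounds additively rather than letting them overlap. My plan is to design an adversarial attack sequence proceeding in two phases: first, repeatedly attack pairs of diametrically opposite leaves in each $A_i$ to force the bottom guard of $A_i$ to be pinned at a leaf rather than at $v_i$, so that the projection of this guard to $v_i$ cannot also play the role of a genuine $T_{d-k}$-guard; then, following an optimal adversary against $\gk(T_{d-k}) - 1$ guards on $T_{d-k}$, force the remaining unpinned guards to number at least $\gk(T_{d-k})$. Making this interference argument rigorous requires carefully tracking which guards can participate in the top-tree defense once they are locked inside their respective $A_i$, using the leaf-distance observation together with the fact that $q \geq k/2$ ensures $T_{d-k}$ itself has diameter strictly greater than $k$.
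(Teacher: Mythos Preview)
Your overall architecture—establish the recursion $\gk(T_d)=m^{d-k}+\gk(T_{d-k})$ and then telescope—is the same as the paper's, but the paper obtains this recursion in a completely different way: it does \emph{not} argue separate upper and lower bounds at all. Instead it applies Proposition~\ref{propR1} once for each of the $m^{d-k}$ vertices at depth $d-k$ (taking that vertex as $x$, its parent as $y$), and each application already yields an exact equality $\gk(T)=\gk(T_y+P)+1$. After $m^{d-k}$ applications the remaining tree is $T_{d-k}$, giving the recursion immediately. All of the lower-bound work you are planning has already been done inside the proof of Proposition~\ref{propR1} via the retraction-plus-superfluous-guard argument, so the paper's proof of Theorem~\ref{thmd3} is essentially a two-line invocation.

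Your proposal, by contrast, has a genuine gap in the lower bound. You correctly observe the two separate inequalities $\gk(T_d)\ge m^{d-k}$ and (via Lemma~\ref{lemma:subgraph}) $\gk(T_d)\ge\gk(T_{d-k})$, and you are honest that combining them additively is the crux. But the adversarial plan you sketch does not obviously work: a guard sent to a leaf $\ell$ of $A_i$ is at distance exactly $k$ from $v_i$, so on the very next step it can return to $v_i$ and participate fully in the $T_{d-k}$ defence. Alternating attacks on opposite leaves of $A_i$ forces \emph{two} guards into $A_i\cup\{\text{path up}\}$ only momentarily, not permanently, and you have $m^{d-k}$ subtrees competing for this trick simultaneously. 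To make the additive bound rigorous you really do need something like the retraction argument from Proposition~\ref{propR1}: show that for every eternal configuration one guard in each $A_i$ is redundant for the retracted $T_{d-k}$ strategy (because two guards must lie in the ``$T_x+P$'' region of each $A_i$). Once you see that, you are essentially re-proving Proposition~\ref{propR1}, and it is cleaner simply to cite it.

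A smaller point on your upper bound: the role swap you describe leaves the designated ``mobile'' guard sitting at $a\in A_i\setminus\{v_i\}$ while the $T_{d-k}$ strategy believes it is at $v_i$; on the next $T_{d-k}$ attack this mobile may need to reach a vertex at distance $k$ from $v_i$, hence up to $2k$ from $a$. The fix is to \emph{not} swap roles: keep the newly arrived guard at $v_i$ as the genuine mobile, leave the displaced guard at $a$ as the anchor, and have that anchor return to $v_i$ on the next step (whatever the attack). With the invariant ``at most one anchor is displaced, and whenever anchor $i$ is displaced a mobile occupies $v_i$'', the strategy goes through cleanly.
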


\begin{proof} The proof is inductive and uses the results of Proposition~\ref{propd1} and Theorem~\ref{thmd2} as base cases.  Let $T:=T_d$.  One at a time, identify each vertex of $T_d$ at depth $d-k$ as vertex $x$ in Proposition 4, with parent $y$, and apply Proposition 4.  Let $T_{d-k}$ be the resulting perfect $m$-ary tree of depth $d-k$ and by Proposition 4, $\gk(T_d) = \gk(T_{d-k})+m^{d-k}$. Repeat the process by identifying all vertices at depth $d-ik$ if they exist for each $i \in \{1,2,\dots,\frac{d-q}{k}\}$ and applying Proposition 4 until the resulting perfect $m$-ary tree is of depth $q$.  This tree is a sub-tree rooted at $r$ of $T_d$ of depth $q$ and is $T_q$ as defined in the statement of the theorem.

In each iteration of applying Proposition 4, we note that the eccentricity of the vertex currently identified as $y$ must always be at least $k$, since we have a path from $y$ to some leaf in $T_q$ by traversing from $y$ to $r$ and back down to another leaf (since $m \geq 2$), and this path must be exactly length $2q \geq \frac{2k}{2} $ by definition of $q$. This satisfies condition 2. in the proposition statement. Further, conditions 1. and 3. are satisfied by the choice of $x$. So our repeated application of Proposition 4 is well-defined.

Thus, by Proposition 4, $$\gk(T_d) =\Big(\sum_{i=1}^{\frac{d-q}{k}} m^{d-ik}\Big)+\gk(T_q) = \frac{m^d-m^q}{m^k-1}+\gk(T_q).$$ \qed\end{proof}

The above theorem yields recursive results. However, earlier results give us distinct values for $\gk(T_q)$, and a closed form for the eternal distance-$k$ domination number of perfect $m$-ary trees can be determined. We summarize these results in the following corollary.

\begin{corollary}For any $m \geq 2$ and $k \geq 2$, let $T$ be a perfect $m$-ary tree of depth $d$ and $q$ be the unique integer such that $d \equiv q$ (mod $k$) with $\frac{k}{2} \leq q < \frac{3k}{2}$.   Then \[\gk(T) = \begin{cases} 1 & \text{ if } 0 \leq d \leq \frac{k}{2}; \\[0.25cm] 2 & \text{ if } \frac{k}{2} < d < k; \\[0.25cm] 1+m^{d-k} & \text{ if } k \leq d \leq \frac{3k}{2}; \\[0.5cm] \displaystyle 2 + \frac{m^d -m^q}{m^k-1}  & \text{ if } d > \frac{3k}{2} \text{ and } \frac{k}{2} \leq q \leq k; \\[0.5cm] \displaystyle 1+\frac{m^d-m^{q-k}}{m^k-1}& \text{ if } d > \frac{3k}{2} \text{ and } k < q \leq \frac{3k}{2}. \end{cases}\]\end{corollary}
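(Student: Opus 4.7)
The plan is to obtain the corollary as a direct consequence of the three preceding results by splitting on the depth $d$ into the five ranges given in the statement.

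The first three cases are immediate. For $0 \leq d \leq k/2$ and for $k/2 < d < k$, the values $\gk(T) = 1$ and $\gk(T) = 2$ come directly from Proposition~\ref{propd1}, and for $k \leq d \leq 3k/2$ the formula $\gk(T) = 1 + m^{d-k}$ is exactly Theorem~\ref{thmd2}.

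For $d > 3k/2$, I would apply Theorem~\ref{thmd3}, which reduces the problem to
$$\gk(T) = \frac{m^d - m^q}{m^k - 1} + \gk(T_q),$$
with $T_q$ a perfect $m$-ary subtree of depth $q$ and $k/2 \leq q < 3k/2$. It then remains to evaluate $\gk(T_q)$ via the previously established cases and simplify. When $k/2 < q \leq k$, Proposition~\ref{propd1} gives $\gk(T_q) = 2$ (and the boundary $q = k$ also yields $2$ via Theorem~\ref{thmd2} as $1 + m^0$), so $\gk(T) = 2 + \frac{m^d - m^q}{m^k - 1}$. When $k < q \leq 3k/2$, Theorem~\ref{thmd2} gives $\gk(T_q) = 1 + m^{q-k}$, and substituting produces
$$\gk(T) = 1 + m^{q-k} + \frac{m^d - m^q}{m^k - 1} = 1 + \frac{m^{q-k}(m^k - 1) + m^d - m^q}{m^k - 1} = 1 + \frac{m^d - m^{q-k}}{m^k - 1},$$
using the identity $m^{q-k}(m^k - 1) = m^q - m^{q-k}$.

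The whole argument is essentially bookkeeping, so the only mild obstacle is handling the boundary values carefully. In particular, one should verify that the two final formulas agree at $q = k$, with common value $2 + \frac{m^d - m^k}{m^k - 1} = 1 + \frac{m^d - 1}{m^k - 1}$, and confirm that the two sub-ranges of $q$ together cover the interval $[k/2, 3k/2)$ on which Theorem~\ref{thmd3} is stated. Beyond that, the proof reduces to substitution and algebraic simplification, with no new ideas beyond what is already packaged in Proposition~\ref{propd1}, Theorem~\ref{thmd2}, and Theorem~\ref{thmd3}.
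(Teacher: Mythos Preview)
Your approach is exactly the one the paper intends: the corollary is stated there without proof, merely as a summary obtained by feeding the values of $\gk(T_q)$ from Proposition~\ref{propd1} and Theorem~\ref{thmd2} into the recursion of Theorem~\ref{thmd3}, and your case split and algebra carry this out correctly. One small point worth tightening is the endpoint $q=k/2$ (possible when $k$ is even): your sub-range $k/2<q\le k$ omits it, and indeed Proposition~\ref{propd1} gives $\gk(T_q)=1$ there rather than $2$, so you may want to flag this boundary explicitly.
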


\section{Tree Reductions for eternal distance-$2$ domination}  \label{section_treedecomposition}

The results in Section~\ref{sec:trees} provide reductions for trees that control the change in eternal distance-$k$ domination number.  In this section, we restrict ourselves to $k=2$.  We show that for trees with certain structure, deleting a portion of the tree results in the eternal distance-$2$ domination number decreasing by $1$.  

For the duration of this section, we require the following particular description of tree structure.

\begin{definition}\label{def:allthesets}
Given a tree $T$, let $x \in V(T)$ be a vertex that is not a leaf and is distance exactly two from some leaf.  Define the following sets: 
\begin{itemize}
    \item[$\bullet$] $L = \{v \in V(T) | \textrm{deg}_{T}(v) = 1 \; \textrm{and} \; d_T(v,x) = 2\}$: the set of leaves in $N_2(x)$\\
    \item[$\bullet$] $\displaystyle{X = \bigcup_{\ell \in L} N_2(\ell):}$ the set of all vertices distance two from vertices of $L$\\
    \item[$\bullet$] $S$ be the set of vertices adjacent to $x$ that are either leaves themselves or adjacent to a vertex in $L$. We further partition $S$ into two sets:\\
    \begin{itemize}
   \item[$\bullet$]  $A \subset S$ the set of vertices with at least two neighbours in $X$ 
   \item[$\bullet$] $B \subseteq S$ is the set of vertices with exactly one neighbour in $X$. 
    \end{itemize}
\end{itemize}
\end{definition}

For the above sets, it is useful to note that $L \cap X$ need not be empty. Furthermore, $x \in X$ and thus $A \cup B = S$ since all vertices in $S$ must be adjacent to at least one vertex in $X$. As seen in Figure~\ref{fig:treeillustration} the sets $L,S,A$ and $B$ are all dependent on  vertex $x$, and we can consider these sets defined for any eligible vertex $x \in V(T)$.  Though $L,S,A,B$ each depend on the choice of $x$, we omit any subscripts in an aim to present results and definitions in a more readable way.

\begin{figure}[htbp]
\[\includegraphics[width=0.6\textwidth]{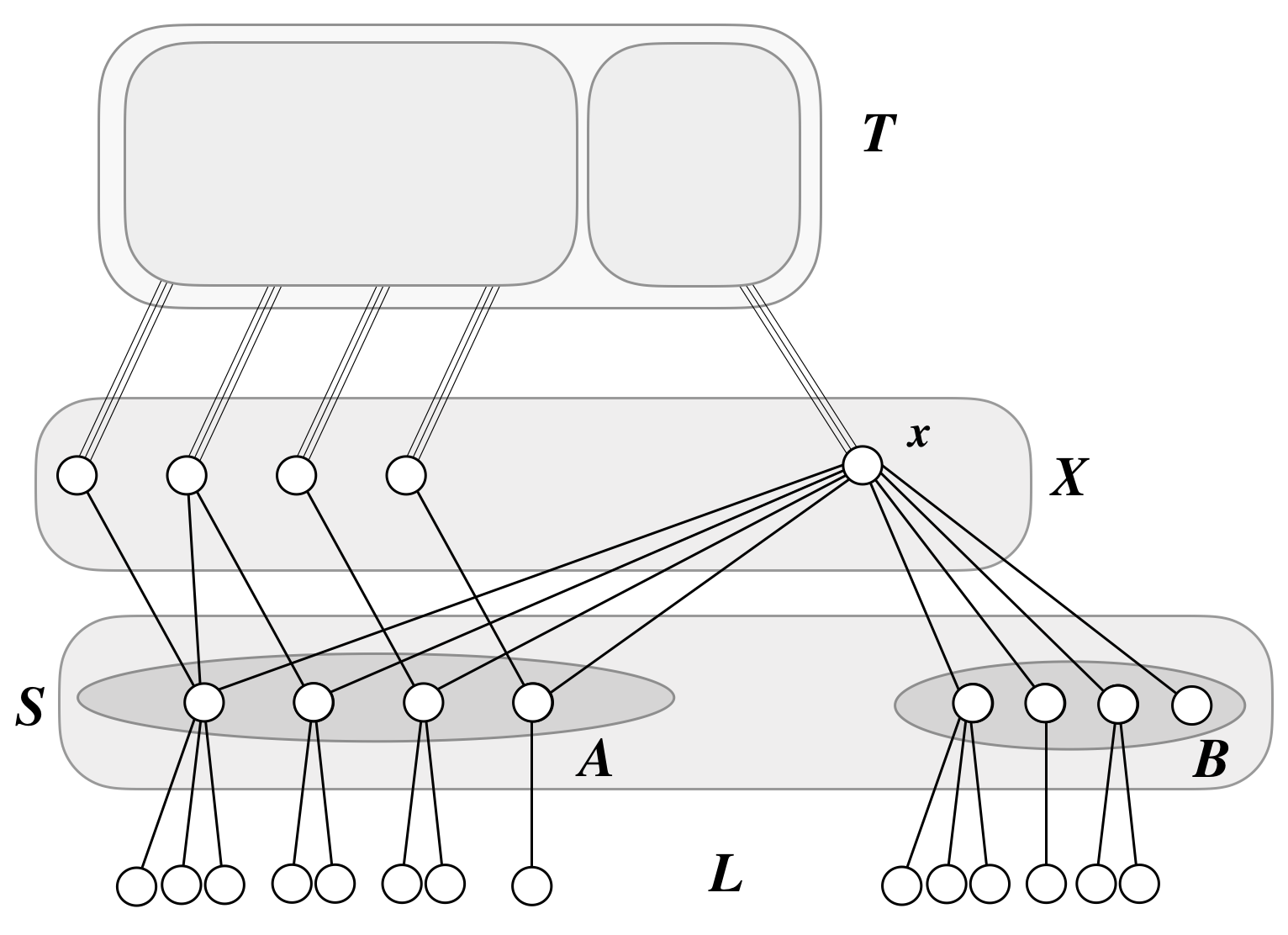} \] 
\caption{An example illustrating the sets defined in Definition~\ref{def:allthesets}.}
\label{fig:treeillustration} 
\end{figure}

\begin{theorem}\label{thm:LASBX} Let $T$ be a tree and suppose there exists a vertex $x \in V(T)$ that generates sets $L$ and $S$ as defined in Definition~\ref{def:allthesets}.  Then $$\gtwo(T) \in \Big\{\gtwo(T'), \gtwo(T')+1\Big\}$$ where $T'$ is the graph induced by the deletion of $L\cup S$ from $T$.\end{theorem}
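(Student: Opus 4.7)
The plan is to prove the two inequalities $\gtwo(T') \le \gtwo(T) \le \gtwo(T') + 1$ separately, with the upper bound being the main constructive content and the lower bound following from a projection argument.

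For the upper bound $\gtwo(T) \le \gtwo(T') + 1$, I will augment an eternal distance-$2$ dominating family on $T'$ by a single ``extra'' guard dedicated to defending $L \cup S$. The key structural fact is that every vertex of $L \cup S$ lies within distance $2$ of $x$, so one guard at $x$ can reach any $L \cup S$ vertex in one move. Initially place $m = \gtwo(T')$ \emph{main} guards on an eternal distance-$2$ dominating set of $T'$ and one extra guard at $x$. I will maintain the invariant: the main guards always occupy a distance-$2$ dominating set of $T'$, and whenever the extra guard sits in $L \cup S$, the main set contains $x$. I then verify three transitions. When the attack is at $u \in V(T')$, the main guards respond using the $T'$-strategy and the extra guard remains at or returns to $x$ (a distance at most $2$). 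When the attack is at $u \in L \cup S$ with the extra guard at $x$, the extra guard moves $x \to u$ while the main guards simulate an attack at $x$ in $T'$, producing a $T'$-dominating set containing $x$. When the attack is at $u \in L \cup S$ with the extra guard already at some $v' \in L \cup S$, the invariant guarantees a main guard at $x$; that guard moves to $u$, the former extra guard moves $v' \to x$, and I relabel so that the guard now at $u$ is the new extra while the guard now at $x$ joins the main set. In each case the moves are legal and the resulting configuration distance-$2$ dominates $T$: the main guards dominate $V(T')$, and a guard at $x$ together with the guard on the attacked $L \cup S$-vertex handles $L \cup S$.

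For the lower bound $\gtwo(T') \le \gtwo(T)$, I plan to project any winning $T$-strategy to a $T'$-strategy via the map $\phi : V(T) \to V(T')$ with $\phi(v) = v$ for $v \in V(T')$ and $\phi(v) = x$ for $v \in L \cup S$. Given an attack sequence on $T'$, I simulate the identical sequence on $T$, let the $T$-strategy respond, and take the projected positions as the $T'$-guards' locations. The verification splits into two claims: (i) if a $T$-guard moves between $u, u' \in V(T)$ with $d_T(u,u') \le 2$, then $d_{T'}(\phi(u), \phi(u')) \le 2$; and (ii) the projection of any distance-$2$ dominating set of $T$ is a distance-$2$ dominating set of $T'$. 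Both exploit the fact that $L \cup S$ is pendant on $x$, so any shortest path in $T$ between two vertices of $V(T')$ either stays inside $V(T')$ or crosses through $x$, making the collapse at $x$ distance-compatible.

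The main obstacle I anticipate is the domination-preservation step in the lower bound: showing that a vertex $u \in V(T')$ which in $T$ is dominated only by some $v \in L \cup S$ is still within distance $2$ of $x$ in $T'$ (so that $\phi(v) = x$ dominates it). This reduces to a tree-path case analysis using $d_T(v, x) \le 2$ and $d_T(u, v) \le 2$: if the $u$-$v$ path passes through $x$, then $d_T(u, x) \le 1$ and so $d_{T'}(u, x) \le 2$; otherwise $u$ sits in $v$'s subtree below $x$, a case that I will handle by exploiting that in any $T$-strategy another guard must also lie within distance $2$ of $u$ along that subtree. The upper bound carries no comparable subtlety once the invariant above is in place.
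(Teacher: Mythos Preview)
Your upper-bound argument is essentially the paper's: both add one extra guard to an optimal $T'$-family, simulate a parallel game on $T'$ (treating any $L\cup S$ attack as an attack at $x$), and shuttle the spare guard between $x$ and $L\cup S$. Your explicit invariant and three-case split match the paper's construction; the paper is just terser.

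For the lower bound the paper does not argue directly at all: it invokes Lemma~\ref{lemma:subgraph} for the tree retraction that sends each vertex of $L\cup S$ to its nearest vertex of $T'$, which is precisely your map $\phi$. The single property driving that lemma is that such a retraction is distance-non-increasing, $d_{T'}(\phi(u),\phi(v))\le d_T(u,v)$, and this one inequality immediately yields both of your claims (i) and (ii): if a guard moves distance at most $2$ in $T$ then so does its shadow in $T'$, and if $v$ dominates $u$ in $T$ then $\phi(v)$ dominates $\phi(u)=u$ in $T'$. No path case analysis is needed.

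Your anticipated ``main obstacle'' and its proposed fix are therefore the wrong track, and the fix is not sound: you cannot assume a \emph{second} guard lies within distance $2$ of $u$, since a distance-$2$ dominating set promises only one. The scenario you worry about (some $u\in V(T')$ sitting strictly below $s\in S$ on the side away from $x$) would mean that deleting $s$ disconnects $u$ from $x$, so $T'$ fails to be a subtree; under the intended reading where $L\cup S$ hangs entirely off $x$, this does not occur and the non-expansiveness of $\phi$ handles everything. Replace the case analysis by a direct appeal to Lemma~\ref{lemma:subgraph} (or prove the non-expansive property once) and the lower bound becomes one line, as in the paper.
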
 

\begin{proof} By Lemma~\ref{lemma:subgraph}, $\gtwo(T) \geq \gtwo(T')$. It is easy to see $\gtwo(T) \leq \gtwo(T')+1$: we create a guard strategy for $T$, based on the movements of guards in $T'$.  Initially, suppose guards occupy an eternal distance-$2$ dominating set on $T'$ that contains $x$.  Place guards on the vertices of $V(T')$ in $T$ and place an additional guard on an arbitrary vertex of $L \cup S$.  Suppose there is an attack in $T$ on a vertex in subtree $T'$: if there is a guard in $L \cup S$, that guard moves to $x$. The remaining guards move as their counterparts in $T'$ would move in response to such an attack (this may result in two guards simultaneously occupying $x$).  Alternately, suppose there is an attack at a vertex of $L \cup S$ in $T$.  In $T'$, we consider an attack at $x$.  The guards in $T'$ move to occupy an eternal distance-$2$ dominating set containing $x$ (which may result in no guard moving).  In $T$, the guard at $x$ moves to the attacked vertex in $L \cup S$; if there is a guard already in $L \cup S$, that guard moves to $x$, and the remaining guards move like their counterparts in $T'$.  Thus, the guards form a distance-$2$ dominating set on $T$. In this manner, $\gk(T')+1$ guards can defend against any sequence of attacks on $V(T)$.\qed\end{proof} 

Observe that even if we restrict $A = \emptyset$, there exist trees for which removing set $S \cup L$ will reduce the eternal distance-$2$ domination number, and others which will leave the eternal distance-$2$ domination number unchanged;  two such trees are illustrated in Figure~\ref{fig:remark}.

\begin{figure}[htbp]
\centering
\includegraphics[width=0.75\textwidth]{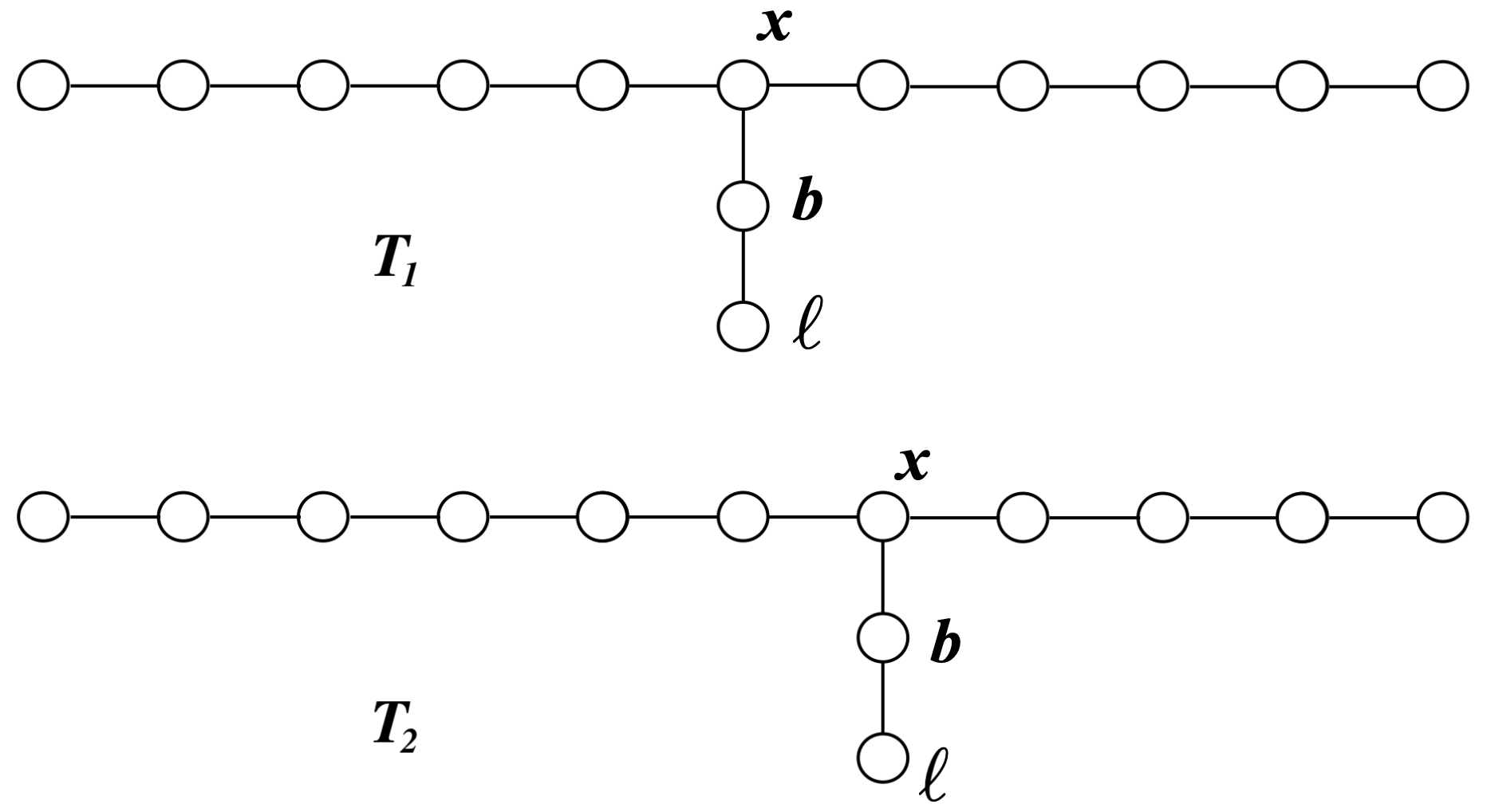}
\caption{Trees $T_1$ and $T_2$.}
\label{fig:remark} 
\end{figure}

Thus, the characterization of $\gtwo(T)$ for all trees remains incomplete and we leave this as an open problem.  In Section~\ref{section_Conclusion} we present further open problems and concluding remarks.

\section{Conclusion and Open Problems}\label{section_Conclusion}

We conclude this paper with a discussion of open problems. Section~\ref{sec:prelim} discussed some graphs for which the parameters $\gamma_k$ and $\gk$ are equal, and others for which the two parameters are not equal; however the question of for which graphs the parameters equal remains open, giving our first open problem.

\begin{question}\label{q1} 
For what class of graphs, $\mathcal{G}$, is $\gamma_k (G) = \gk(G)$ for all $G \in \mathcal{G}$? \end{question}

Clearly there are many classes of graphs for which the answer to Question 1 is no, which leads to our next two open problems.

\begin{question}
	Let $\mathcal{G}_{n,m}$ be the family of simple graphs on $n$ vertices and $m$ edges. For a fixed $n$ and $m$, what are the graphs with the smallest eternal distance-$k$ domination number, or largest eternal distance-$k$ domination number?
\end{question}

\begin{question} Given a fixed $n$ and fixed $k$, what possible values can $\gk(G)$ take on, for graphs $G$ of order $n$?
\end{question}

Though we provide reductions in working towards determining $\gk(T)$ and further reductions in working towards determining $\gtwo(T)$ for any tree $T$, have not yet completed the characterizations and leave them as open problems.  We conclude by proposing a few related questions one could investigate.

\begin{question} Which graphs $G$ have the property that $\gtwo (G) = \gamma(G)$? Can we characterize the trees with this property? \end{question} 

Clearly $\gtwo(G) = 1$ if and only if $\gamma(G)=1$ (i.e. $G$ has a universal vertex).  If $\gamma(G) = 2$ then $\gtwo(G)=2$, but the converse is not always true.  For example, $\gtwo(C_{10})=2 < 4 = \gamma(C_{10})$. In considering trees, we observe that $\gtwo(K_{1,n})=\gamma(K_{1,n})$ but for caterpillar graphs where there are no degree $2$ vertices, $\gtwo$ and $\gamma$ are not equal. However, for every tree $T$ that is formed from a path of $n$ vertices where every internal vertex has a leaf, with $n\geq 3$, we have that $\gtwo (T) \neq \gamma(T)$. 

Certainly if there exists a minimum dominating st where each vertex in this set has at least two private neighbours then $\gtwo(T)\leq \gamma(T)$, but the converse remains open.

\begin{question}\label{q2} Suppose that for every minimum dominating set of a tree $T$, each vertex in the dominating set has at least two private neighbours. Then is $\gtwo(T)=\gamma(T)$? \end{question}

\begin{acknowledgements}
M.E. Messinger acknowledges research support from NSERC (grant application 2018-04059).  D. Cox acknowledges research support from NSERC (2017-04401) and Mount Saint Vincent University.  E. Meger acknowledges research support from Universit\'e du Qu\'ebec \`a Montr\'eal and Mount Allison University.
\end{acknowledgements}

%
%

\end{document}